\DeclareMathAlphabet{\mathpzc}{OT1}{pzc}{m}{it} 
\numberwithin{equation}{section}       
\numberwithin{figure}{section}       
\theoremstyle{plain}
\newtheorem{prop}{Proposition}[section]
\newtheorem{lemm}[prop]{Lemma}
\newtheorem{fact}[prop]{Fact}
\newtheorem{theoalph}{Theorem}
\theoremstyle{definition}
\newtheorem{defi}[prop]{Definition}
\theoremstyle{remark}
\newtheorem{rema}[prop]{Remark}
\newtheoremstyle{citing}
  {3pt}
  {3pt}
  {\itshape}
  {}
  {\bfseries}
  {.}
  {.5em}
  {\thmnote{#3}}
\theoremstyle{citing}
\newtheorem*{generic}{}
\newcommand{\C}{\mathbb{C}}
\newcommand{\R}{\mathbb{R}}
\newcommand{\cL}{\mathcal{L}}
\newcommand{\cM}{\mathcal{M}}
\newcommand{\cV}{\mathcal{V}}
\newcommand{\cW}{\mathcal{W}}
\newcommand{\fD}{\mathfrak{D}}
\newcommand{\sA}{\mathscr{A}}
\newcommand{\sG}{\mathscr{G}}
\newcommand{\sP}{\mathscr{P}}
\newcommand{\sW}{\mathscr{W}}
\newcommand{\tB}{\widetilde{B}}
\newcommand{\teta}{\widetilde{\teta}}
\newcommand{\tvarphi}{\widetilde{\varphi}}
\renewcommand{\=}{ : = }
\DeclareMathOperator{\diam}{diam}
\DeclareMathOperator{\dist}{dist}
\DeclareMathOperator{\Crit}{Crit} 
\DeclareMathOperator{\CV}{CV} 
\newcommand{\CC}{\overline{\C}}
\DeclareMathOperator{\Per}{Per}
\newcommand{\hcL}{{\widehat{\mathcal{L}}}}
\DeclareMathOperator{\dom}{dom}
\newcommand{\eps}{\varepsilon}
\newcommand{\1}{\pmb{1}}
\begin{document}

\title[Large deviation for H\"{o}lder continuous potential]{Large deviation principles of one-dimensional maps for H\"{o}lder continuous potentials}

\author{Huaibin Li}
\thanks{The author was partially supported by
the National Natural Science Foundation of China (Grant No. 11101124) and
the FONDECYT grant 3110060 of Chile.}
\address{Huaibin Li, Facultad de Matem{\'a}ticas, Pontificia Universidad Cat{\'o}lica de Chile, Avenida Vicu{\~n}a Mackenna~4860, Santiago, Chile}
\email{matlihb@gmail.com}
\subjclass[2010]{60F10, 37E05, 37F10, 37D35}
\keywords{Interval maps, rational maps, large deviation principles, H\"{o}lder continuous potentials }

\begin{abstract}
We show some level-$2$ large deviation principles for real and complex one-dimensional maps satisfying a weak form of hyperbolicity. More precisely, we prove a large deviation principle for the distribution of iterated preimages,
periodic points, and Birkhoff averages.
\end{abstract}

\maketitle

\section{Introduction}
We consider large deviations for dynamical systems. It is well known that for uniformly
hyperbolic dynamical systems the large deviation principle holds, and the rate functions
are characterized by the differences between the metric entropies and the sums of positive
Lyapunov exponents of invariant probability measures, see for example~\cite{Kif90, You90} and references therein.  For nonuniformly
hyperbolic dynamical systems there are some known results, see for example~\cite{AraPac06, ChuTak12, Com09, ComRiv11,CliThoYam13, KelNow92, Mel09b, MelNic08,  PolSha96, PolSha09b, PolSha09a, PolShaYur98, ReyYou08, XiaFu07}, and references therein. See also the survey paper of Denker~\cite{Den96}.

This paper is devoted to the study of level-$2$ large deviation principles for H\"{o}lder continuous potentials of real or complex one-dimensional  maps, under weak hyperbolicity  assumptions.  The first key observation is a result obtained by the author and Rivera-Letelier in~\cite{LiRiv12one} that for a real or complex one-dimensional map satisfying such weak hyperbolicity  assumptions, every H\"{o}der continuous potential has a unique equilibrium state, see Lemma~\ref{l:uniqueeq}. This, together with various formulas to compute the pressure function,  allows us to apply~\cite[Theorem~C]{ComRiv11}, which is a
variant of a general result of Kifer in~\cite[Theorem 3.4]{Kif90}, to obtain a full level-$2$ large deviation
principles for sequences of measures associated to periodic points, iterated preimages, and
Birkhoff averages.

We now proceed to describe our results in more detail.
\subsection{Equilibrium states and level-2 large deviation priciple}
Let~$(X, \dist)$ be a compact metric space and $T:X\to X$ a continuous map.
Denote by~$\cM(X)$ the space of Borel probability measures on~$X$ endowed with the weak* topology, and by~$\cM(X, T)$ the subspace of~$\cM(X)$ of those measures that are invariant by~$T$.
For each measure~$\nu$ in~$\cM(X, T)$, denote by~$h_{\nu}(T)$ the \emph{measure-theoretic entropy} of~$\nu$.
For a continuous function $\varphi: X \to \R$, denote by~$P(T,\varphi)$ the \emph{topological pressure of $T$ for the potential $\varphi$}, defined by
\begin{equation}
\label{e:variational principle}
P(T, \varphi)
\=
\sup\left\{h_\nu(T) + \int_X \varphi \ d\nu : \nu\in \cM(X, T)\right\}.
\end{equation}
A measure~$\nu$ in~$\cM(X, T)$ is called an \emph{equilibrium state of $T$ for the potential~$\varphi$}, if the supremum in~\eqref{e:variational principle} is attained at~$\nu$.

Recall that a
sequence  Borel probability measures $(\Omega_n)_{n\ge 1}$  on
$\cM(X)$ is said to satisfy a \emph{large deviation principle} if there exists a lower semi-continuous function called a
\emph{rate function $I:\cM(X)\to [0,+\infty]$,} such that for every closed subset $\mathcal {F}$ of
$\cM(X)$ we have
$$\limsup_{n\rightarrow +\infty}\frac{1}{n}\log \Omega_n(\mathcal {F})\le -\inf_\mathcal {F} I,$$ and such that for every
open subset $\mathcal {G}$ of $\cM(X)$ we have,
$$\liminf_{n\rightarrow +\infty} \frac{1}{n}\log \Omega_n(\mathcal {G})\ge -\inf_{\mathcal
{G}}I. $$

\begin{defi}
Let~$(X, \dist)$ be a compact metric space and $T:X\to X$ be a continuous map. Let~$\varphi: X\rightarrow \mathbb{R}$ be a
H\"{o}lder continuous function.
Moreover, for each integer $n\ge 1$ let $F_n :
X\rightarrow \cM(X)$ be the function defined by
$$F_n(x):=\frac{1}{n}\sum_{i=0}^{n-1}\delta_{T^i(x)}=\frac{1}{n}\left(\delta_x+\delta_{T(x)}+\cdots+\delta_{T^{n-1}(x)}\right),$$
and let $S_n(\varphi): X\rightarrow \R$ be defined by
$$S_n(\varphi)(x):=\sum_{i=0}^{n-1}\varphi\circ T^i(x)=\varphi(x)+\varphi(T(x))\cdots+\varphi(T^{n-1}(x)). $$
\begin{itemize}
\item {\bf Iterated preimages:} Given $x_0\in X,$ for each integer
$n\ge 1$  put $$\Theta_n(x_0):=\sum_{y\in T^{-n}(x_0)}\frac{\exp
(S_n(\varphi)(y))}{\sum_{y'\in T^{-n}(x_0)}\exp
(S_n(\varphi)(y'))}\delta_{F_n(y)}. $$
\item{\bf Periodic points:} Letting  $P_n:= \{p \in X: T^n(p) = p\},$
put $$\Theta_n:=\sum_{p\in P_n}\frac{\exp
(S_n(\varphi)(p))}{\sum_{p'\in P_n}\exp
(S_n(\varphi)(p'))}\delta_{F_n(p)}. $$
\item {\bf Birkhoff averages:} Assume there is a unique equilibrium state $\nu_\varphi$ of $T$ for the potential $\varphi$, put $\Sigma_n:= F_n[\nu_\varphi]$($ \emph{i.e.}$ the
image measure of $\nu_\varphi$ by $F_n$).
\end{itemize}

\noindent We say the \emph{level-$2$ large deviation principle holds for $(T, \varphi)$ with respect to iterated preimages (resp. periodic orbits, Birkhoff averages)} if there exist a unique equilibrium state  $\nu_\varphi$ of $T$ for the
potential $\varphi$, and if the
sequences $(\Theta_n(x_0))_{n\ge 1}$ (resp.
$(\Theta_n)_{n\ge 1}$, $(\Sigma_n)_{n\ge 1}$)
converges
to~$\delta_{\nu_\varphi}$ in the weak$^*$ topology and satisfies a
large deviation principle.
\end{defi}

\subsection{Statements of results}
We state~$2$ results, one in the real setting and the other in the complex setting.
To simplify the exposition, they are formulated in a more restricted situation than what we are able to handle, see the Main Theorem in~\S\ref{s:a reduction} for a more general formulation of our results.

Given a compact interval~$I$, a smooth map~$f : I \to I$ is \emph{non-degenerate}, if the number of points at which the derivative of~$f$ vanishes is finite, and if for every such point there is a higher order derivative of~$f$ that is non-zero.
\begin{theoalph}
\label{t:real}
Let~$I$ be a compact interval and let $f: I \to I$ be a topologically exact non-degenerate smooth map.
Assume~$f$ has only hyperbolic repelling periodic points, and for each critical value~$v$ of~$f$ we have
$$ \lim_{n\to +\infty} |Df^n(v)| = +\infty. $$
Then for every H\"{o}lder continuous potential $\varphi: I\rightarrow \R$, the level-$2$ large deviation principle holds for $(f, \varphi)$ with respect to iterated preimages, periodic orbits, and Birkhoff averages, respectively, with the rate function $I^\varphi:
\cM(I)\rightarrow [0, +\infty]$ given by the following
$$I^\varphi(\mu)=\begin{cases}P(f,\varphi)-\int_I \varphi d\mu- h_\mu(f)
\,\,\,\,\,\,\mbox{if}\,\, \mu\in \cM(I, f);
\\ +\infty \,\,\hspace{35mm} \mbox{if}\,\, \mu\in
\cM(I)\setminus\cM(I, f).\end{cases}$$
Furthermore, for each convex open subset $\mathscr{G}$ of $\cM(I)$
containing some invariant measure we have $\inf_{\sG} I^\varphi =
\inf_{\overline{\sG}}I^\varphi,$ and
$$\lim_{n\rightarrow +\infty}\frac{1}{n}\log\Theta_n(\sG)=\lim_{n\rightarrow +\infty}\frac{1}{n}\log\Theta_n(z_0)(\sG)
=\lim_{n\rightarrow +\infty}\frac{1}{n}\log\Sigma_n(\sG)
=-\inf_{\sG}I^\varphi. $$
Moreover, the above expression remains true replacing $\sG$ by
$\overline{\sG}$.
\end{theoalph}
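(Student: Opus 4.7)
The plan is to reduce the theorem to the abstract level-$2$ criterion of \cite[Theorem~C]{ComRiv11}, a variant of Kifer's \cite[Theorem~3.4]{Kif90}. That criterion asks for two inputs: (a)~uniqueness of the equilibrium state for every H\"older continuous perturbation of $\varphi$, which is furnished by applying Lemma~\ref{l:uniqueeq} to the potential $\varphi+\psi$ for Hölder $\psi$; and (b)~three asymptotic ``counting formulas'' identifying the normalising denominators of $\Theta_n(x_0)$, $\Theta_n$ and $\Sigma_n$ with the topological pressure.

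Concretely, for every continuous $\psi:I\to\R$ I would establish
\[
\lim_{n\to\infty}\tfrac{1}{n}\log\!\!\!\sum_{y\in f^{-n}(x_0)}\!\!\!\exp\bigl(S_n(\varphi+\psi)(y)\bigr)
=\lim_{n\to\infty}\tfrac{1}{n}\log\sum_{p\in P_n}\exp\bigl(S_n(\varphi+\psi)(p)\bigr)
=P(f,\varphi+\psi),
\]
together with the Gibbs-type statement $\tfrac{1}{n}\log\int_I\exp(S_n\psi)\,d\nu_\varphi\to P(f,\varphi+\psi)-P(f,\varphi)$. These rest on the spectral theory of the normalised transfer operator developed in \cite{LiRiv12one}: topological exactness distributes preimages densely, the hyperbolic-repelling hypothesis controls periodic orbits, and the divergence of $|Df^n|$ along critical values yields the distortion estimates on hyperbolic inverse branches needed to bypass the critical set.

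Granting (a) and (b), the scaled log-moment generating function of each sequence $\Omega_n\in\{\Theta_n(x_0),\Theta_n,\Sigma_n\}$ against a continuous test $\psi$ collapses to a ratio whose logarithm tends to $P(f,\varphi+\psi)-P(f,\varphi)$. Invoking \cite[Theorem~C]{ComRiv11} then yields the weak$^*$ convergence $\Omega_n\to\delta_{\nu_\varphi}$ and an LDP whose rate function is the Legendre transform of $\psi\mapsto P(f,\varphi+\psi)-P(f,\varphi)$; by the variational principle \eqref{e:variational principle} and the uniqueness of $\nu_\varphi$ this Legendre transform is precisely $I^\varphi$ as displayed.

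For the final assertion on convex open $\sG$: the rate function $I^\varphi$ is affine-in-disguise on $\cM(I,f)$ (entropy and integration of $\varphi$ are both affine) and lower semicontinuous there, so any invariant $\mu\in\overline{\sG}$ can be approximated along the segment from a fixed invariant $\mu_0\in\sG$ by measures in $\sG\cap\cM(I,f)$ whose $I^\varphi$-values converge to $I^\varphi(\mu)$; this forces $\inf_{\sG}I^\varphi=\inf_{\overline{\sG}}I^\varphi$, and sandwiching with the already established LDP upper and lower bounds upgrades them to genuine limits on both $\sG$ and $\overline{\sG}$. The main obstacle lies in step~(b), in particular the lower bound for the periodic-orbit pressure in this non-uniformly hyperbolic regime: it demands an exhaustion of the dynamics by hyperbolic subsystems whose pressures approach $P(f,\varphi+\psi)$, along which periodic orbits and Birkhoff sums can be transferred back to the ambient map. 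Once these pressure formulas are in place, the application of the Comman--Rivera-Letelier criterion is essentially formal.
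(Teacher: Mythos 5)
Your high-level architecture matches the paper's: reduce to the abstract level-$2$ criterion of Comman--Rivera-Letelier (Lemma~\ref{l:large}), which requires uniqueness of equilibrium states for all H\"older perturbations of $\varphi$ plus the three pressure formulas, and then invoke the lemma. The paper, however, inserts an intermediate reduction that you skip and that does real work: under the hypotheses of Theorem~\ref{t:real}, one first shows (via backward contraction from~\cite{BruRivShevSt08}, Ma\~n\'e's theorem, and Fact~\ref{f:Polyshrink}) that $f$ satisfies a Polynomial Shrinking Condition with every exponent $\beta>1$, and then one works with that condition alone (the Main Theorem). This is not cosmetic, because the Polynomial Shrinking Condition is exactly what is used, via the Main Theorem of~\cite{LiRiv12two}, to establish that \emph{every} H\"older potential $\varphi+\psi$ is hyperbolic for $f$. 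Your step (a) applies Lemma~\ref{l:uniqueeq} to $\varphi+\psi$, but that lemma only applies to potentials that are hyperbolic for $f$, and you never explain why $\varphi+\psi$ is hyperbolic. That is a genuine gap: without this input, both the uniqueness of the equilibrium state and the pressure formulas collapse, since all the transfer-operator estimates from~\cite{LiRiv12one} require a hyperbolic potential.

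The second gap is in step (b) for periodic orbits. You identify the lower bound $\liminf_n \frac1n\log\sum_{p\in P_n}\exp(S_n(\varphi+\psi)(p))\ge P(f,\varphi+\psi)$ as the main obstacle and propose an exhaustion by hyperbolic subsystems; that is defensible (it is essentially the Katok--Pesin/Przytycki--Urba\'nski input used via Lemma~\ref{l:diffeomorphic pressure}), but in the paper's argument the lower bound is actually the short direction, following quickly by pinning a periodic point inside each diffeomorphic pull-back after one extra push by topological exactness. The direction you do not address at all is the \emph{upper} bound $\limsup\le P$, and this is where the bulk of the technical work lies: one must cover $J(f)$ by small intervals with boundary in $J(f)$, control the number of maximal monotone branches of $f^n$ inside each pull-back (Lemma~\ref{lem:bad}), bound the number of period-$n$ points per pull-back by a subexponential factor $\eta^n$ (Lemma~\ref{l:boundperiodic}), use the stability of small pull-backs (Lemma~\ref{l:stable}) and almost-properness (Lemma~\ref{l:almost properness}) to push boundary points to boundary points, invoke bounded distortion of Birkhoff sums on small pull-backs, and compare the resulting periodic-point sum to the preimage sum over $f^{-n}(\partial\cV)$ (via a Przytycki-type estimate, \eqref{e:top}). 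None of this appears in your sketch, and it does not follow from ``spectral theory of the normalised transfer operator.'' As written, your proposal would establish the preimage and Birkhoff-average pressure formulas but would leave the periodic-orbit one unproven. The final convexity assertion, by contrast, needs no separate argument: it is already part of the conclusion of Lemma~\ref{l:large}, so your segment-approximation paragraph is superfluous (and glosses over what happens when $\sG$ contains no invariant measure of finite rate).
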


To state our main result in the complex setting, for each complex rational map~$f$ denote by~$\Crit(f)$ the set of critical points of~$f$, and by~$J(f)$ the Julia set of~$f$.
\begin{theoalph}
\label{t:complex}
Let~$f$ be one of the following:
\begin{enumerate}[1.]
\item
An at most finitely renormalizable complex polynomial of degree at least~$2$, without neutral cycles and such that for each critical value~$v$ of~$f$ we have
$$ \lim_{n\rightarrow +\infty}|Df^n(v)|=+\infty; $$
\item
A complex rational map of degree at least~$2$, without parabolic cycles and such that for every critical value~$v$ of~$f$ we have
$$ \sum_{n = 1}^{+\infty} \frac{1}{|Df^n(v)|} < +\infty. $$
\end{enumerate}
Then for every H\"{o}lder continuous potential $\varphi: J(f)\rightarrow \R$, the level-$2$ large deviation principle holds for $(f, \varphi)$ with respect to iterated preimages, periodic orbits, and Birkhoff averages, respectively, with the rate function $I^\varphi:
\cM(J(f))\rightarrow [0, +\infty]$ given by the following
$$I^\varphi(\mu)=\begin{cases}P(f,\varphi)-\int_{J(f)}\varphi d\mu- h_\mu(f)
\,\,\,\,\,\,\mbox{if}\,\, \mu\in \cM(J(f), f);
\\ +\infty \,\,\hspace{35mm} \mbox{if}\,\, \mu\in
\cM(J(f))\setminus\cM(J(f), f).\end{cases}$$
Furthermore, for each convex open subset $\mathscr{G}$ of $\cM(J(f))$
containing some invariant measure we have $\inf_{\sG} I^\varphi =
\inf_{\overline{\sG}}I^\varphi,$ and
$$\lim_{n\rightarrow +\infty}\frac{1}{n}\log\Theta_n(\sG)=\lim_{n\rightarrow +\infty}\frac{1}{n}\log\Theta_n(z_0)(\sG)
=\lim_{n\rightarrow +\infty}\frac{1}{n}\log\Sigma_n(\sG)
=-\inf_{\sG}I^\varphi. $$
Moreover, the above expression remains true replacing $\sG$ by
$\overline{\sG}$.
\end{theoalph}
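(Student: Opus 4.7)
The plan is to derive Theorem~\ref{t:complex} from the abstract level-$2$ large deviation principle~\cite[Theorem~C]{ComRiv11}, itself a variant of Kifer's general scheme~\cite{Kif90}. That theorem converts three inputs into the desired LDP with the free-energy rate function: (i)~existence and uniqueness of the equilibrium state of $f$ for every potential of the form $\varphi + t\psi$, where $\psi$ is H\"older continuous and $t$ lies in a neighborhood of~$0$; (ii)~the pressure formulas
\begin{equation*}
P(f, \varphi + t\psi)
\ = \
\lim_{n\to\infty} \tfrac{1}{n} \log \sum_{p \in P_n} \exp\bigl(S_n(\varphi + t\psi)(p)\bigr)
\ = \
\lim_{n \to \infty} \tfrac{1}{n} \log \sum_{y \in f^{-n}(z_0)} \exp\bigl(S_n(\varphi + t\psi)(y)\bigr),
\end{equation*}
for a suitable $z_0 \in J(f)$; and (iii)~G\^ateaux differentiability of $t \mapsto P(f, \varphi + t\psi)$ at $t = 0$, with derivative $\int_{J(f)} \psi\, d\nu_\varphi$.

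Step~(i) is the direct content of Lemma~\ref{l:uniqueeq}, applied to the H\"older continuous potential $\varphi + t\psi$ (H\"older continuity is preserved under the perturbation). Step~(iii) then follows from~(i) by a standard convex-analytic argument: the map $t \mapsto P(f, \varphi + t\psi)$ is convex, and by the variational principle~\eqref{e:variational principle} its subdifferential at $0$ is exactly the set $\{\int_{J(f)} \psi\, d\nu : \nu \text{ is an equilibrium state of $\varphi$}\}$, which is a singleton by~(i); hence the function is differentiable. The main technical obstacle is Step~(ii): the pressure formulas via periodic points and via preimages must be established under the weak hyperbolicity hypotheses of Theorem~\ref{t:complex}. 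Under the at most finitely renormalizable assumption in case~(1) and the summability of $|Df^n(v)|^{-1}$ in case~(2), these formulas should follow from the thermodynamic formalism developed by the author and Rivera-Letelier in~\cite{LiRiv12one}, where conformal measures and appropriate inducing schemes produce a spectral picture for the transfer operator robust enough to accommodate H\"older perturbations of the potential. This is where the bulk of the work lies.

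With~(i)--(iii) in hand, \cite[Theorem~C]{ComRiv11} yields the weak$^*$ convergence of $(\Theta_n(z_0))$, $(\Theta_n)$, and $(\Sigma_n)$ to $\delta_{\nu_\varphi}$ along with the full LDP, the rate function emerging in the claimed form by Legendre duality from~(iii) and the variational principle. For the refinement concerning a convex open $\sG \subset \cM(J(f))$ meeting $\cM(J(f), f)$, the equality $\inf_\sG I^\varphi = \inf_{\overline{\sG}} I^\varphi$ reduces to affinity of $\mu \mapsto h_\mu(f) + \int \varphi\, d\mu$ on the convex set $\cM(J(f), f)$ together with upper semi-continuity of entropy (which holds in this weakly hyperbolic regime): any invariant measure in $\overline\sG$ realizing the infimum can be approximated inside $\sG$ by convex combinations with an interior invariant measure, along which $I^\varphi$ is continuous. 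The equality of infima then squeezes the large deviation upper and lower bounds to a common limit, giving the final displayed formula. Theorem~\ref{t:real} is handled by exactly the same scheme, with \cite{LiRiv12one} again supplying the required thermodynamic inputs in the interval setting.
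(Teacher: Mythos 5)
Your high-level outline captures the right template — reduce to~\cite[Theorem~C]{ComRiv11} via uniqueness of equilibrium states and pressure limits — but several steps are off in ways that matter, and you miss the paper's actual reduction.

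First, the paper's entire proof of Theorem~\ref{t:complex} consists of verifying that, under either hypothesis~(1) or~(2), the map is backward contracting (\cite{LiShe10b} or~\cite{Riv07}) and expanding away from critical points (\cite{KozvSt09} or~\cite[Cor.~8.3]{Riv07}), hence satisfies the \emph{Polynomial Shrinking Condition} for every~$\beta > 1$ by Fact~\ref{f:Polyshrink}, and then invoking the Main Theorem. That intermediate reduction through the Polynomial Shrinking Condition is the whole point of the hypotheses in cases~(1) and~(2); your proposal never mentions it, and instead tries to argue directly from the summability of $|Df^n(v)|^{-1}$ and finite renormalizability, which is not how those hypotheses are used.

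Second, you apply Lemma~\ref{l:uniqueeq} to $\varphi + t\psi$ as if H\"older continuity sufficed. It does not: that lemma requires the potential to be \emph{hyperbolic} for~$f$ (i.e.\ $\sup_{J(f)} \frac{1}{n}S_n < P(f,\cdot)$ for some~$n$). The paper supplies this via~\cite[Main Theorem]{LiRiv12two}, which shows that under the Polynomial Shrinking Condition every H\"older potential, and every Lipschitz perturbation of it, is hyperbolic. Without this step, uniqueness of equilibrium states for $\varphi + \psi$ is not available.

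Third, your Step~(iii) (G\^ateaux differentiability of $t \mapsto P(f,\varphi+t\psi)$) is not a hypothesis of~\cite[Theorem~C]{ComRiv11}. What that theorem actually requires is the limit~\eqref{e:et} for the specific sequence $(\Omega_n)$ under consideration. For the periodic-point and preimage sequences this follows from the two pressure formulas; but for the Birkhoff-average sequence $\Sigma_n$ it requires a genuinely separate limit, namely $\lim_n \frac{1}{n}\log\int \exp(S_n(\psi))\,d\nu_\varphi = P(f,\varphi+\psi)-P(f,\varphi)$ (part~3 of the Key Lemma). Differentiability at $t=0$ says nothing about this limit, which involves $t = 1$. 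So your three-input scheme is mis-specified: the relevant third input is the asymptotics of $\int\exp(S_n\psi)\,d\nu_\varphi$, not a derivative of pressure.

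Finally, for the complex case the paper does not establish the pressure formulas itself; it states that the proof of~\cite[Theorem~B]{ComRiv11} adapts under the Polynomial Shrinking Condition. The detailed work on pressure formulas (Key Lemma) is carried out only in the real setting. So your statement that ``Theorem~\ref{t:real} is handled by exactly the same scheme'' is inverted: the real case is where the paper supplies new proofs, and the complex case is where it cites earlier work.
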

Recall that for an integer~$s \ge 2$, a complex polynomial~$f$ is \emph{renormalizable of period~$s$}, if there are Jordan disks~$U$ and~$V$ in~$\C$, such that~$\overline{U} \subset V$ and such that the following hold:
\begin{itemize}
\item $f^s: U\to V$ is proper of degree at least~$2$;
\item The set $\{z\in U: f^{sn}(z)\in U \text{ for all } n=1,2,\ldots\}$ is a connected and it is strictly contained in~$J(f)$;
\item For each critical point~$c$ of~$f$ there exists at most one~$j$ in~$\{0,1,\ldots, s-1\}$ such that~$c$ is~$f^j(U)$.
\end{itemize}
We say that~$f$ is \emph{infinitely renormalizable} if there are infinitely many integers~$s \ge 2$ such that~$f$ is renormalizable of period~$s$.

\subsection{Organization of the proof}
\label{ss:organization}
The paper is organized as follows.
In~\S\ref{s:a reduction} we state a version of Theorems~\ref{t:real} and~\ref{t:complex} that holds for a more general class of maps; it is stated as the ``Main Theorem''.
After deriving Theorems~\ref{t:real} and~\ref{t:complex} from the Main Theorem and known results in~\S\ref{ss:proof of real and complex}, we sate our main technical result as the ``Key Lemma'' in~\S\ref{ss:a reduction}, where we also derive the Main Theorem from it.
In \S\ref{s:proofkeylemma}, we  give the proof of the Key Lemma.

\subsection{Acknowledgments}
\label{ss:Acknowledgements}
The main idea of this paper came to the author after several discussions with Juan Rivera-Letelier on his a joint work with Henri Comman, see~\cite{ComRiv11}. I am very grateful to him for those stimulating conversations  and for his useful comments and corrections to an earlier version of this paper.

\section{A reduction}
\label{s:a reduction}
We start this section stating a version of Theorems~\ref{t:real} and~\ref{t:complex} that holds for a more general class of maps; it is stated as the ``Main Theorem''.
In~\S\ref{ss:proof of real and complex} we derive Theorems~\ref{t:real} and~\ref{t:complex} as a direct consequence of the Main Theorem and known results.
In~\S\ref{ss:a reduction} we state our more general result as the ``Key Lemma'', and we derive the Main Theorem from it.

To state our main theorem, we introduce a class of interval maps that includes non-degenerate smooth maps as special cases.
Let~$I$ be a compact interval of $\R$.
For a differentiable map~$f : I \to I$, a point of~$I$ is \emph{critical} if the derivative of~$f$ vanishes at it.
Denote by~$\Crit(f)$ the set of critical points of~$f$.
A differentiable interval map~$f : I \to I$ is \emph{of class~$C^3$ with non-flat critical points}, if it has a finite number of critical points and if:
\begin{itemize}
\item
The map~$f$ is of class~$C^3$ outside $\Crit(f)$;
\item
For each critical point~$c$ of~$f$ there exists a number $\ell_c>1$ and diffeomorphisms~$\phi$ and~$\psi$ of~$\R$ of class~$C^3$, such that $\phi(c)=\psi(f(c))=0$, and such that on a neighborhood of~$c$ on~$I$, we have
$$ |\psi\circ f| = |\phi|^{\ell_c}. $$
\end{itemize}
Note that each smooth non-degenerate interval map is of class~$C^3$ with non-flat critical points, and that each map of class~$C^3$ with non-flat critical points is continuously differentiable.

For an interval map of class~$C^3$ with non-flat critical points~$f$, denote by~$\dom(f)$ the interval on which~$f$ is defined, and denote by~$\dist$ the distance on~$\dom(f)$ induced by the norm distance on~$\R$.
On the other hand, for a complex rational map~$f$ we use~$\dom(f)$ to denote the Riemann sphere~$\CC$, which we endow with the spherical metric, that we also denote by~$\dist$.
In both, the real and complex setting, for a subset~$W$ of~$\dom(f)$ we use~$\diam(W)$ to denote the diameter of~$W$ with respect to~$\dist$.

\begin{defi}
Let~$f$ be either a complex rational map, or an interval map of class~$C^3$ with non-flat critical points.
The \emph{Julia set~$J(f)$ of~$f$} is the complement of the largest open subset of~$\dom(f)$ on which the family of iterates of~$f$ is normal.
\end{defi}

If~$f$ is a complex rational map of degree at least~$2$, then~$J(f)$ is a perfect set that is equal to the closure of repelling periodic points.
Moreover, $J(f)$ is completely invariant and~$f$ is topologically exact on~$J(f)$.
We denote by~$\sA_{\C}$ the collection of all rational maps of degree at least~$2$.

In contrast with the complex setting, the Julia set of an interval map of class~$C^3$ with non-flat critical points might be empty, reduced to a single point, or might not be completely invariant.
In that follows, we denote by~$\sA_{\R}$ the collection of interval maps of class~$C^3$ with non-flat critical points, whose Julia set contains at least~$2$ points and is completely invariant. Note that if~$f : I \to I$ is a non-degenerate smooth map that is topologically exact, then~$J(f) = I$ and~$f$ is in~$\sA_{\R}$.
On the other hand, if~$f$ is an interval map in~$\sA_{\R}$ that is topologically exact on~$J(f)$, then~$J(f)$ has no isolated points.
For more background on the theory of Julia sets, see for example~\cite{dMevSt93} for the real setting, and~\cite{CarGam93,Mil06} for the complex setting.

Throughout the rest of this article we put~$\sA \= \sA_{\R}\cup \sA_{\C}$ and for each~$f$ in~$\sA$ we restrict the action of~$f$ to its Julia set.
In particular, the topological pressure of~$f$ is defined through measures supported on~$J(f)$ and equilibrium states are supported on~$J(f)$.

\begin{defi}
For~$\beta> 0$, a map~$f$ in~$\sA$ satisfies the \emph{Polynomial Shrinking Condition with exponent $\beta$}, if there exist constants~$\rho_0 > 0$ and~$C_0 > 0$ such that for every~$x$ in~$J(f)$, every integer $n \ge 1$, and every connected component~$W$ of $f^{-n}(B(x, \rho_0))$, we have
$$ \diam(W) \le C_0 n^{-\beta}. $$
\end{defi}

\begin{generic}[Main Theorem]\label{mtm:polyshrink}
Let~$\beta > 1$, and let~$f$ be a map in~$\sA$ satisfying the Polynomial Shrinking Condition with exponent~$\beta$.
Suppose furthermore in the real case that~$f$ is topologically exact on its Julia set.
Then for each~$\alpha$ in~$(\beta^{-1}, 1]$, and every H\"{o}lder continuous potential $\varphi: J(f)\to \R$ of exponent~$\alpha$, the level-$2$ large deviation principle holds for $(f, \varphi)$ with respect to iterated preimages, periodic orbits, and Birkhoff averages, respectively, with the rate function $I^\varphi:
\cM(J(f))\rightarrow [0, +\infty]$ given by the following
$$I^\varphi(\mu)=\begin{cases}P(f,\varphi)-\int_{J(f)}\varphi d\mu- h_\mu(f)
\,\,\,\,\,\,\mbox{if}\,\, \mu\in \cM(J(f), f);
\\ +\infty \,\,\hspace{35mm} \mbox{if}\,\, \mu\in
\cM(J(f))\setminus\cM(J(f), f).\end{cases}$$
Furthermore, for each convex open subset $\mathscr{G}$ of $\cM(J(f))$
containing some invariant measure we have $\inf_{\sG} I^\varphi =
\inf_{\overline{\sG}}I^\varphi,$ and
$$\lim_{n\rightarrow +\infty}\frac{1}{n}\log\Theta_n(\sG)=\lim_{n\rightarrow +\infty}\frac{1}{n}\log\Theta_n(z_0)(\sG)
=\lim_{n\rightarrow +\infty}\frac{1}{n}\log\Sigma_n(\sG)
=-\inf_{\sG}I^\varphi. $$
Moreover, the above expression remains true replacing $\sG$ by
$\overline{\sG}$.
\end{generic}

\begin{rema}In real case, our arguments and results can be generalized without change to the following. Let $f$ be an interval map in $\sA_\R$ that is topologically exact on its Julia set, and let $\varphi:J(f)\to \R$ be a H\"{o}lder continuous function that is hyperbolic for $f$. If there is a dense vector subspace $\sW$ of H\"{o}lder continuous functions defined on $J(f)$ such that for every $\psi\in \sW$, the function~$\varphi+\psi$ is hyperbolic for~$f$, then the level-$2$ large deviation principle holds for $(f, \varphi)$ with respect to iterated preimages, periodic orbits, and Birkhoff averages, respectively, with the rate function $I^\varphi:
\cM(J(f))\rightarrow [0, +\infty]$ given by the following
$$I^\varphi(\mu)=\begin{cases}P(f,\varphi)-\int_{J(f)}\varphi d\mu- h_\mu(f)
\,\,\,\,\,\,\mbox{if}\,\, \mu\in \cM(J(f), f);
\\ +\infty \,\,\hspace{35mm} \mbox{if}\,\, \mu\in
\cM(J(f))\setminus\cM(J(f), f).\end{cases}$$
Furthermore, for each convex open subset $\mathscr{G}$ of $\cM(J(f))$
containing some invariant measure we have $\inf_{\sG} I^\varphi =
\inf_{\overline{\sG}}I^\varphi,$ and
$$\lim_{n\rightarrow +\infty}\frac{1}{n}\log\Theta_n(\sG)=\lim_{n\rightarrow +\infty}\frac{1}{n}\log\Theta_n(z_0)(\sG)
=\lim_{n\rightarrow +\infty}\frac{1}{n}\log\Sigma_n(\sG)
=-\inf_{\sG}I^\varphi. $$
Moreover, the above expression remains true replacing $\sG$ by
$\overline{\sG}$.
\end{rema}

\subsection{Proofs of Theorems~\ref{t:real} and~\ref{t:complex} assuming the Main Theorem}
\label{ss:proof of real and complex}
First, we recall the definition of ``backward contracting property'' which was first introduced in~\cite{Riv07} for the complex case, and in~\cite{BruRivShevSt08} for the real case.
For each map~$f$ in~$\sA$, put
$$ \CV(f) \= f(\Crit(f))
\text{ and }
\Crit'(f) \= \Crit(f)\cap J(f). $$
For a subset~$V$ of~$\dom(f)$, and an integer~$m \ge 1$, each connected component of~$f^{-m}(V)$ is a \emph{pull-back of by~$f^m$}. A pull-back $W$ of $V$ by $f^n$ is \emph{diffeomorphic} if $f^n$
maps $W$ diffeomorphically onto a connected component of $V$, and it is \emph{non-diffeomorphic} otherwise.

For every~$c$ in~$\Crit(f)$ and $\delta >0$, denote by~$\tB(c,\delta)$ the pull-back of~$B(f(c),\delta)$ by~$f$ that contains~$c$.

\begin{defi}
  Given a constant $r>1$, a map~$f$ in~$\sA$ is \emph{backward contracting with constant $r$}, if there exists $\delta_{0}>0$ such that for every~$c$ in~$\Crit'(f)$, every~$\delta$ in~$(0, \delta_0)$, every integer $n\ge 0$, and every component~$W$ of $f^{-n}(\tB(c,r\delta)),$  we have that
$$ \dist (W, \CV(f)) \le \delta
\text{ implies }
\diam(W) < \delta. $$
If for each~$r > 1$ the map~$f$ is backward contracting with constant~$r$, then~$f$ is \emph{backward contracting}.
\end{defi}

A map~$f$ in~$\sA$ is \emph{expanding away from critical points}, if for every neighborhood~$V$ of~$\Crit'(f)$ the map~$f$ is uniformly expanding on the set
$$ K(V)
\=
\{z\in J(f): f^i(z)\not\in V \text{ for all } i\ge 0\}. $$
In other words, there exist $C>0$ and  $\lambda>1$ such that for every~$z$ in~$K(V)$ and~$n\ge 0$, we have $|Df^n(z)|\ge C\lambda^n$.

\begin{fact}[\cite{RivShe1004}, Theorem~A]
\label{f:Polyshrink}
For each map~$f$ in~$\sA$ and each $\beta>0,$ there exists $r>1$ such that the following property holds.
If~$f$ is backward contracting with constant~$r$ and is expanding away from critical points, then~$f$ satisfies the Polynomial Shrinking
Condition with exponent~$\beta$.
\end{fact}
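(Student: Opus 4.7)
The plan is to follow a pull-back along its forward orbit and combine two effects: the uniform contraction coming from expansion on the complement of a small critical neighborhood, and the bounded expansion produced by each close visit to~$\Crit'(f)$, which backward contracting with sufficiently large constant~$r$ can be used to tame. Fix~$x\in J(f)$, $n\ge 1$, and a connected component~$W$ of~$f^{-n}(B(x,\rho_0))$; for each~$j\in\{0,1,\ldots,n\}$ set $W^{(j)}\= f^j(W)$, so $W^{(0)}=W$ and $W^{(n)}\subset B(x,\rho_0)$. The target is $\diam(W^{(0)})\le C_0 n^{-\beta}$.

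First I would fix a neighborhood~$V$ of~$\Crit'(f)$ small enough that the expanding-away-from-critical-points hypothesis yields constants $C>0$ and $\lambda>1$ controlling the expansion along orbit segments that avoid~$V$. Partition the indices $\{0,1,\ldots,n-1\}$ into \emph{free times}, for which $W^{(j)}\cap V=\emptyset$, and \emph{critical times}. On any maximal block of free times the corresponding iterate of~$f$ is a diffeomorphism on the relevant pull-back and has bounded distortion (the Koebe lemma in the complex case, the real Koebe principle in the real case), so consecutive diameters along such a block contract by a factor comparable to~$\lambda^{-1}$. Free blocks therefore contribute better than any polynomial rate and are harmless for the final bound.

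The substantive part is at the critical times. At each such~$j$ I would associate a scale $\delta_j\ge\dist(W^{(j)},\CV(f))$ such that $W^{(j+1)}$ lies inside some $\tB(c,r\delta_j)$ with $c\in\Crit'(f)$; backward contracting then yields $\diam(W^{(j)})<\delta_j$. The crucial quantitative point is that, by combining the expansion on the intervening free block with the geometry of~$\tB(c,r\delta)$, the scales at successive critical times~$j_1<j_2<\cdots<j_s$ can be shown to form a strictly decreasing sequence whose consecutive ratios are bounded by~$r^{-1}$, so that $\diam(W^{(0)})\lesssim r^{-s}\rho_0$. A pigeonhole/multiplicity argument, using that at each fixed scale only boundedly many pull-backs of a small neighborhood of~$\CV(f)$ along an orbit of length~$n$ can be comparable in size, then forces $s\ge\kappa\log n$ for some~$\kappa>0$ depending only on~$f$, hence $\diam(W^{(0)})\lesssim n^{-\kappa\log r}$. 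Choosing $r=r(\beta)$ large enough that $\kappa\log r\ge\beta$ completes the argument.

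The main obstacle is cascades of critical encounters: the orbit can pass near one critical point and, within very few steps, approach another, so the scales $\delta_j$ do not decay geometrically from one isolated encounter to the next. Handling such cascades requires telescoping the backward contracting inequality over an entire cluster of critical times and showing that the composed inequality still produces an effective scale ratio bounded by a power of~$r^{-1}$, rather than degrading with cascade length. This careful bookkeeping, together with the exponential contraction on free blocks, is the technical core of~\cite{RivShe1004}.
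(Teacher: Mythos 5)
The paper itself offers no proof of this statement: it is quoted verbatim as Theorem~A of~\cite{RivShe1004} and used as a black box, so there is no in-paper argument to compare yours against. Judged as a reconstruction of the argument in~\cite{RivShe1004}, your sketch does identify the right ingredients: a decomposition of the pull-back history into free blocks, governed by the expansion on $K(V)$ together with a distortion control, and critical encounters, governed by backward contracting, with $r$ eventually chosen large in terms of $\beta$.

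The quantitative chain, however, is not sound as written. First, backward contracting with constant~$r$ is a statement about the \emph{diameter} of a pull-back of $\tB(c,r\delta)$ that comes within distance~$\delta$ of $\CV(f)$; it is not directly a bound on an ``expansion factor per critical encounter,'' and converting it into the assertion that the scales $\delta_{j_1}>\delta_{j_2}>\cdots$ decrease with ratio $\le r^{-1}$ requires precisely the cascade bookkeeping you flag in your last paragraph and then do not carry out — this is not a side remark but the entire content of the theorem. Second, the pigeonhole step ``$s\ge\kappa\log n$'' is stated as if a large number of critical encounters is \emph{forced}, which cannot be right: an orbit segment that never enters a critical neighborhood has $s=0$, and in that case free-block contraction alone already gives $\diam(W)\lesssim\lambda^{-n}\rho_0$, far better than polynomial. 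The actual logic has to be a dichotomy — if $s$ is small (say $\le\kappa\log n$), the free time is $\ge n-O(\log n)$ and exponential contraction wins; if $s$ is large, the telescoped scale ratio $r^{-s}$ wins — and the polynomial exponent~$\beta$ comes out of balancing these two regimes, which is where the choice $r=r(\beta)$ enters. As it stands, your final inequality $\diam(W^{(0)})\lesssim n^{-\kappa\log r}$ rests on the unjustified lower bound on~$s$ and does not account for the free-block contribution at all, so the argument does not close even at the heuristic level.
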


\begin{proof}[Proof of Theorem~\ref{t:real}]
By~\cite[Theorem~$1$]{BruRivShevSt08}, the map~$f$ is backward
contracting and by Ma\~{n}e's theorem~$f$ is expanding away from
critical points, see for example~\cite{dMevSt93}.
Then Fact~\ref{f:Polyshrink} implies that for each~$\beta > 1$ the map~$f$ satisfies the Polynomial Shrinking Condition with exponent~$\beta$.
So the desired assertions are  direct consequences of the Main Theorem.
\end{proof}

\begin{proof}[Proof of Theorem~\ref{t:complex}]
By either~\cite[Theorem~A]{LiShe10b} or~\cite[Theorem~A]{Riv07}, the map~$f$ is backward contracting, and by either~\cite{KozvSt09} or~\cite[Corollary~$8.3$]{Riv07}, it is expanding away from critical points.
Then Fact~\ref{f:Polyshrink} implies that for each~$\beta > 1$ the map~$f$ satisfies the Polynomial Shrinking Condition with exponent~$\beta$.
So the desired assertions are  direct consequences of the Main Theorem.
\end{proof}

\subsection{A reduction}
\label{ss:a reduction}
In this subsection we prove the Main Theorem assuming the following key lemma, whose proof occupies the rest of this article.

Given an interval map~$f$ in~$\sA_\R$, a continuous function~$\varphi:J(f)\to \R$ is said to be \emph{hyperbolic for~$f$}, if for some integer~$n \ge 1$ the function $S_n(\varphi)\=\sum_{i=0}^{n-1}\varphi\circ f^{i}$ satisfies
$\sup_{J(f)} \frac{1}{n} S_n(\varphi) < P(f, \varphi).$

Let $f$ be a  map in~$\sA.$ Given a measurable function $g: J(f)\to [0,+\infty),$ a probability measure $\mu$ supported on $J(f)$ is \emph{$g$-conformal for $f$}, if for every measurable set $A\subset J(f)$ on which $f$ is injective, we have $$\mu(f(A))=\int_A g d\mu.$$

\begin{lemm}\label{l:uniqueeq}
Let $f$ be a map in $\sA$ that is topologically exact on its Julia set.  Then for every H\"{o}lder continuous function $\varphi:J(f)\to \R$ that is hyperbolic for $f$, there exist an atom-free $\exp(P(f,\varphi)-\varphi)$-conformal measure $\mu$ for $f$, and a unique equilibrium state $\nu$ of $f$ for the potential $\varphi.$ Moreover,~$\nu$ is absolutely continuous with respect to $\mu$.
\end{lemm}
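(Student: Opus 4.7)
The plan is to follow the strategy of~\cite{LiRiv12one}: first build the conformal measure~$\mu$ through a Patterson--Sullivan construction, then promote it to an equilibrium state~$\nu$ via a normalized transfer operator, and finally obtain uniqueness by proving that any competing equilibrium state is absolutely continuous with respect to~$\mu$.

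To construct $\mu$, I would fix a point $x_0\in J(f)$ outside the grand orbit of $\Crit(f)$ and consider the normalized Patterson sums
$$
Z_n \= \sum_{y\in f^{-n}(x_0)} \exp(S_n(\varphi)(y)),
\qquad
\mu_n \= \frac{1}{Z_n}\sum_{y\in f^{-n}(x_0)} \exp(S_n(\varphi)(y))\,\delta_y.
$$
Topological exactness of $f$ on~$J(f)$ implies $\frac{1}{n}\log Z_n \to P(f,\varphi)$, so every weak$^{*}$ accumulation point $\mu$ of $(\mu_n)$ is a Borel probability measure on~$J(f)$, and checking the defining identity on sets where $f$ is injective shows that $\mu$ is $\exp(P(f,\varphi)-\varphi)$-conformal. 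Atom-freeness of~$\mu$ then follows from the hyperbolicity of~$\varphi$: if $y_0$ were a periodic atom of period~$n$, the conformality relation forces
$$
\mu(\{y_0\}) \= \exp\bigl(S_n(P(f,\varphi)-\varphi)(y_0)\bigr)\,\mu(\{y_0\}),
$$
so $\tfrac{1}{n}S_n\varphi(y_0) = P(f,\varphi)$, contradicting the hyperbolicity inequality; a similar argument combining hyperbolicity with topological exactness rules out atoms at non-periodic points.

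To produce the equilibrium state I would introduce the normalized transfer operator
$$
\widetilde{\sL}g(x) \= e^{-P(f,\varphi)}\sum_{f(y)=x} e^{\varphi(y)}g(y),
$$
whose dual fixes~$\mu$, and find a bounded continuous density $\rho\colon J(f)\to(0,+\infty)$ with $\widetilde{\sL}\rho = \rho$. The candidate $\rho$ is extracted as a Cesaro limit of $\widetilde{\sL}^n\1$, with the required equicontinuity and lower bound coming from distortion control along backward branches---precisely the type of estimate later captured by the Polynomial Shrinking Condition---combined with the spectral-gap effect produced by hyperbolicity of~$\varphi$. Setting $\nu \= \rho\,\mu$ then gives an $f$-invariant measure absolutely continuous with respect to~$\mu$, and the Rokhlin identity
$$
h_\nu(f) \= \int \log\Jac_\nu f\, d\nu \= \int \bigl(P(f,\varphi)-\varphi\bigr)\, d\nu
$$
shows that $\nu$ is an equilibrium state.

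The main obstacle is uniqueness, since $f$ is not uniformly hyperbolic on~$J(f)$. The plan is to show that any equilibrium state $\nu'$ for $(f,\varphi)$ is automatically absolutely continuous with respect to~$\mu$, and then to appeal to ergodicity of $\nu$ to conclude $\nu'=\nu$. Absolute continuity is obtained by identifying the Jacobian of $\nu'$ up to coboundary with $\exp(P(f,\varphi)-\varphi)$, using the variational identity together with hyperbolicity of~$\varphi$ to rule out a singular part concentrated near the post-critical set; ergodicity of $\nu$ is then established through a Hopf-type argument applied along hyperbolic times, whose existence is supplied by the weak hyperbolicity of~$f$. Since this last step is the technical core of~\cite{LiRiv12one}, I would invoke it directly from that reference rather than reproduce the argument here.
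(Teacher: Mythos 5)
The paper does not actually prove this lemma from scratch: the proof given is a one-line citation to~\cite[Theorem]{DenUrb91e} and~\cite[Proposition~3.1]{InoRiv12} in the complex case, and to~\cite[Theorem~A or~B]{LiRiv12one} and~\cite[Theorem~6]{Dob13} in the real case. Your proposal is instead a sketch of the machinery \emph{inside} those references, and at the level of broad strategy (Patterson--Sullivan construction of a conformal measure, a fixed density for the normalized transfer operator, then uniqueness via absolute continuity) it is consistent with what those papers do. So the comparison is really ``citation versus outline of the cited proof,'' not two competing arguments.

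That said, there are concrete gaps in the outline that would need repair before it counts as a proof. First, you assert that $\tfrac1n\log Z_n \to P(f,\varphi)$ ``follows from topological exactness''; in fact this is exactly Part~1 of the paper's Key Lemma, which the author proves separately using Proposition~\ref{p:opertor}, and it is not a formal consequence of exactness alone. Using it here to build $\mu$ introduces a dependency the paper is careful to avoid. Second, you appeal to ``distortion control along backward branches --- precisely the type of estimate later captured by the Polynomial Shrinking Condition'' to get equicontinuity of $\widetilde{\sL}^n\1$. But Lemma~\ref{l:uniqueeq} is stated for any $f\in\sA$ topologically exact on $J(f)$ with a hyperbolic H\"older potential; the Polynomial Shrinking Condition is an extra hypothesis of the Main Theorem and is \emph{not} assumed here, so you cannot lean on it. The actual mechanism in~\cite{LiRiv12one} is the hyperbolicity of $\varphi$ (giving $\sup\tvarphi<P$), Ma\~n\'e--Pesin type hyperbolic times, and Koebe-style distortion, which is a different and strictly weaker set of tools. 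Third, atom-freeness at non-periodic points is waved off with ``a similar argument,'' but this is a genuine step: one has to rule out atoms whose forward orbit accumulates on the critical set, and the hyperbolicity inequality alone does not obviously do this. Finally, the absolute-continuity-of-any-equilibrium-state step, which you correctly identify as the crux, is simply deferred to the reference, so the proposal does not actually close the argument. None of these gaps reflect a wrong overall approach, but the proposal as written would not stand on its own, whereas the paper's proof, though terse, is complete because the cited theorems carry all the weight.
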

\begin{proof}
In the complex case, the assertions are a direct consequence  of~\cite[Theorem]{DenUrb91e} and of~\cite[Proposition~3.1]{InoRiv12}.
In the real setting, the assertions follow directly from~\cite[Theorem~A]{LiRiv12one} and~\cite[Theorem~6]{Dob13} or~\cite[Theorem~B]{LiRiv12one}.
\end{proof}

\begin{generic}[Key Lemma]
Let $f$ be an interval map in~$\sA_\R$ that is topologically exact on its Julia set $J(f).$ Then for every H\"{o}lder continuous function $\varphi:J(f)\to \R$ that is hyperbolic for $f$, the following properties hold:
\begin{enumerate}[1.]
\item
For every point $x_0$ in $J(f)$, we have $$P(f,\varphi)=\lim_{n \to + \infty} \frac{1}{n} \log \sum_{y\in f^{-n}(x_0)} \exp(S_n(\varphi)(y));$$
\item
If for every integer $n\geq 1$, put~$\Per_n(f)\=\{p\in J(f): f^n(p)=p\},$ then we have $$P(f,\varphi)=\lim \limits_{n\to +\infty}\frac{1}{n}\log \sum_{p\in \Per_n(f)}\exp(S_n(\varphi)(p)).$$
\item
Let $\nu$ be the unique equilibrium state of $f$ for the potential~$\varphi$ given by Lemma~\ref{l:uniqueeq}. Then for every H\"{o}lder continuous function $\psi:J(f)\to \R$ such that $\varphi+\psi$ is hyperbolic for $f$, we have $$\lim_{n\to +\infty}\frac{1}{n} \log \int \exp(S_n(\psi))d \nu =P(f,\varphi+\psi)-P(f,\varphi).$$
\end{enumerate}
\end{generic}

In order to prove our Main Theorem, we also need the following lemma, which is a variant of a general result of Kifer in~\cite{Kif90}.
\begin{lemm}[Theorem~C,~\cite{ComRiv11}]\label{l:large}
Let $X$ be a compact metrizable topological space, and let $T: X \to X$ be
a continuous map such that the measure-theoretic entropy of $T$, as a function defined
on $\cM(X,T)$ is finite and upper semi-continuous. Fix $\varphi\in C(X)$, and let $\cW$ be a dense
vector subspace of $C(X)$ such that for each $\phi\in \cW$ there is a unique equilibrium state of $T$
for the potential $\varphi+\phi$. Let $I^\varphi: \cM(X)\to [0, +\infty]$ be the function defined by
$$I^\varphi(\mu)=\begin{cases}P(T,\varphi)-\int_X\varphi d\mu- h_\mu(T)
\,\,\,\,\,\,\mbox{if}\,\, \mu\in \cM(X,T);
\\ +\infty \,\,\hspace{35mm} \mbox{if}\,\, \mu\in
\cM(X)\setminus\cM(X,T).\end{cases}$$
Then every sequence $(\Omega_n)_{n\ge 1}$ of Borel probability measures on $\cM(X)$ such that for every $\phi\in \cW$
\begin{equation}\label{e:et}
\lim_{n\to +\infty}\frac{1}{n}\log \int_{\cM(X)}\exp \left(n\int_X \phi d\mu\right)d \Omega_n(\mu) =P(T, \varphi+\phi)-P(T,\varphi),
\end{equation}
 satisfies a
large deviation principle with rate function $I^\varphi,$ and it converges in the weak* topology to the Dirac mass  supported on the unique equilibrium state of $T$ for the
potential $\varphi$. Furthermore, for each convex and open subset $\sG$ of $\cM(X)$ containing some
invariant measure, we have

$$\lim_{n\rightarrow\infty}\frac{1}{n}\log\Omega_n(\sG)=\lim_{n\rightarrow\infty}\frac{1}{n}\log\Omega_n(\overline{\sG})
=-\inf_{\sG}I^\varphi=-\inf_{\overline{\sG}}I^\varphi. $$
\end{lemm}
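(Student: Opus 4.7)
The plan is to apply Lemma~\ref{l:large} (the Kifer--Comman--Rivera-Letelier variant) to $T = f$ on $X = J(f)$, with $\cW$ taken to be the vector space of H\"older continuous functions of exponent~$\alpha$ on $J(f)$, which is dense in $C(J(f))$ by Stone--Weierstrass. To invoke Lemma~\ref{l:large}, I first record the standing hypotheses: $J(f)$ is compact, $f$ is continuous on $J(f)$, and the measure-theoretic entropy is finite and upper semi-continuous on $\cM(J(f), f)$ (by Lyubich in the complex case, and by known results for interval maps of class~$C^3$ with non-flat critical points in the real case).

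Next I must verify that for each $\phi \in \cW$, the potential $\varphi + \phi$ admits a unique equilibrium state. Using the Polynomial Shrinking Condition with exponent $\beta > 1$ together with the H\"older exponent condition $\alpha > 1/\beta$, I would show that every such potential $\varphi + \phi$, itself H\"older of exponent $\alpha$, is \emph{hyperbolic} for~$f$ in the sense required by the Key Lemma: PSC with $\beta>1$ forces pull-back diameters to decay polynomially, and the constraint $\alpha\beta > 1$ then gives a geometric bound on $\tfrac{1}{n}\sup S_n(\varphi+\phi)$ that lies strictly below $P(f, \varphi+\phi)$. Granted this hyperbolicity, Lemma~\ref{l:uniqueeq} supplies a unique equilibrium state $\nu_{\varphi+\phi}$, absolutely continuous with respect to an atom-free conformal measure, thereby satisfying the uniqueness hypothesis of Lemma~\ref{l:large}.

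The heart of the reduction is verifying the exponential-integral limit~\eqref{e:et} of Lemma~\ref{l:large} for each of the three sequences $\Theta_n(x_0)$, $\Theta_n$, $\Sigma_n$, using the Key Lemma in the real case and its established complex counterpart from prior literature. For iterated preimages, a direct computation starting from the definition of $\Theta_n(x_0)$ and the identity $n\int\phi\,dF_n(y) = S_n(\phi)(y)$ gives
$$\int_{\cM(J(f))} \exp\left(n\int\phi\, d\mu\right) d\Theta_n(x_0)(\mu) \;=\; \frac{\sum_{y\in f^{-n}(x_0)} \exp(S_n(\varphi+\phi)(y))}{\sum_{y\in f^{-n}(x_0)} \exp(S_n(\varphi)(y))},$$
so that applying Part~1 of the Key Lemma to both numerator and denominator (with the two hyperbolic potentials $\varphi+\phi$ and $\varphi$) yields the target limit $P(f,\varphi+\phi) - P(f,\varphi)$. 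The periodic-orbit case $\Theta_n$ is handled identically using Part~2. For Birkhoff averages, the change-of-variables for $\Sigma_n = F_n[\nu_\varphi]$ collapses the integral to $\int \exp(S_n(\phi))\, d\nu_\varphi$, and Part~3 of the Key Lemma delivers the same limit.

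With~\eqref{e:et} verified, Lemma~\ref{l:large} directly yields the large deviation principle with rate function $I^\varphi$, the weak-$*$ convergence of each sequence to $\delta_{\nu_\varphi}$, the equality $\inf_\sG I^\varphi = \inf_{\ov{\sG}}I^\varphi$ for every convex open $\sG$ meeting $\cM(J(f),f)$, and the coincidence of the three $\tfrac{1}{n}\log$-limits with $-\inf_\sG I^\varphi$ (and their equality on $\ov{\sG}$). The main obstacle in this reduction is the automatic hyperbolicity of all H\"older potentials of exponent $\alpha > 1/\beta$ under PSC with exponent $\beta > 1$: this interaction between the PSC exponent and the H\"older exponent is what pins down the threshold $\alpha > 1/\beta$ in the hypothesis, and once it is established the rest of the argument is a clean application of the Key Lemma plus Lemma~\ref{l:large}.
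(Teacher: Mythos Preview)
Your proposal does not address the stated lemma at all. Lemma~\ref{l:large} is quoted in the paper as Theorem~C of Comman--Rivera-Letelier~\cite{ComRiv11} and is \emph{not} proved in this paper; it is an external input. What you have written is instead a sketch of the proof of the \emph{Main Theorem assuming the Key Lemma}, i.e., the argument that appears in \S\ref{ss:a reduction} immediately after Lemma~\ref{l:large}. You have inverted the logical roles: rather than proving Lemma~\ref{l:large}, you are using it.

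If your intended target was the Main Theorem, then your outline is essentially correct and matches the paper's own reduction. The only notable difference is the choice of dense subspace: the paper takes $\sP$ to be the Lipschitz functions and invokes~\cite[Main Theorem]{LiRiv12two} directly to obtain hyperbolicity of $\varphi+\psi$, whereas you take $\cW$ to be the H{\"o}lder-$\alpha$ functions and sketch the PSC-based mechanism by which that hyperbolicity is established (which is precisely the content of~\cite{LiRiv12two}). Both choices work, and the computation of the exponential integrals for $\Theta_n(x_0)$, $\Theta_n$, and $\Sigma_n$, followed by an appeal to Parts~1--3 of the Key Lemma, is identical in the two write-ups. But as a proof of Lemma~\ref{l:large} itself, your proposal is simply off-target: proving that lemma would require the abstract large-deviations machinery (Legendre transforms, the G{\"a}rtner--Ellis argument, and the convexity refinement for open sets) from~\cite{ComRiv11} and~\cite{Kif90}, none of which appears in your sketch.
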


\begin{proof}[Proof of the Main Theorem assuming the Key Lemma]
Let $\sP$ be the collection of all Lipschitz  continuous function defined on $J(f).$ Then $\sP$ is a dense vector subspace of $C(J(f)).$
By~\cite[Main Theorem]{LiRiv12two} we have that $\varphi$ is hyperbolic for $f$, and that for every Lipschitz  continuous function $\psi: J(f)\to \R$ we have $\varphi+\psi$ is also hyperbolic for $f$. By Lemma~\ref{l:uniqueeq}  there is a unique equilibrium state $\nu_\varphi$ of $f$ for the
potential $\varphi,$ and for every $\psi\in \sP$ there is a unique equilibrium state of $f$ for the
potential $\varphi+\psi.$

In the complex case, the proof of~\cite[Theorem~B]{ComRiv11} can be adapted to yield the Main Theorem, although there it is assumed  that $f$ satisfies the ``topological Collet-Eckmann" condition.

To prove assertions in the real setting, fix $\psi\in \sP$.  For the
sequence $(\Theta_n)_{n\geq 1}$ associated to periodic points we have
\begin{multline*}
\int_{\cM(J(f))} \exp \left(n\int_{J(f)} \psi d\mu \right)d \Theta_n(\mu)
\\=\frac{\sum_{p\in \Per_n }\exp \left(S_n(\varphi)(p)\right)\exp \left(n\int_{J(f)} \psi d F_n(p)\right)}{\sum_{p'\in \Per_n} \exp \left(S_n(\varphi)(p')\right)}\\ =\frac {\sum_{p\in \Per_n} \exp \left(S_n(\varphi+\psi)(p)\right)}{\sum_{p'\in \Per_n} \exp \left(S_n(\varphi)(p')\right)}.
\end{multline*}
Analogously, for the sequence $(\Theta_n(x_0))_{n\geq 1}$ associated to the iterated preimages of a point
$x_0\in J(f)$, we have
$$\int_{\cM(J(f))} \exp \left(n\int_{J(f)} \psi d\mu \right)d \Theta_n(x_0)(\mu)= \frac {\sum_{x\in f^{-n}(x_0)} \exp \left(S_n(\varphi+\psi)(x)\right)}{\sum_{y\in f^{-n}(x_0)} \exp \left(S_n(\varphi)(y)\right)};$$ for the the sequence $(\Sigma_n)_{n\geq 1}$ associated to the Birkhoff averages we have $$\int_{\cM(J(f))} \exp \left(n\int_{J(f)} \psi d\mu \right)d \Sigma_n(\mu)=\int_{J(f)}\exp(S_n(\psi))d \nu_\varphi.$$ Therefore, the Key Lemma implies that~(\ref{e:et}) holds with $\phi=\psi$, and with $(\Omega_n)_{n\geq 1}$ replaced by each of the sequences $(\Theta_n(x_0))_{n\geq 1}$, $(\Theta_n)_{n\geq 1}$ and $(\Sigma_n)_{n\geq 1}$, respectively. On the other hand, the topological entropy of~$f$ is finite, and the measure-theoretic entropy of~$f$ is upper semi-continuous, see for example~\cite{BruKel98, kel98}.
Consequently, the assertion of Main theorem follows from Lemma~\ref{l:large}. The proof is complete.
\end{proof}

\section{Proof of the Key Lemma}\label{s:proofkeylemma}
 Given an interval map~$f$ in $\sA_\R$, and a continuous function $\phi:J(f)\rightarrow \R,$ denote by $\cL_\phi$ the {\em Ruelle-Perron-Frobenius
operator}, acting on the space of bounded
functions defined on $J(f)$ and taking values in $\C$, as follows,
$$\cL_{\phi}(h)(x)\=\sum_{y\in f^{-1}(x)}\exp\left(\phi(y)\right)h(y).$$ Moreover, put $\hcL_\phi\=\exp(-P(f,\phi))\cL_\phi$.

The following proposition is the main ingredient to prove parts~$1$ and~$3$ of the Key Lemma.

\begin{prop}[Proposition~2.1, \cite{LiRiv12one}]\label{p:opertor}
Let $f$ be an interval map in~$\sA_\R$ that is topologically exact on its Julia set $J(f),$ and denoted by $\1$ the function defined on $J(f)$ that is constant equal to $1.$ Then for every H\"{o}lder continuous function $\phi:J(f)\to \R$ that is hyperbolic for $f$,  and every $\eps>0$, we have for every sufficiently large integer~$n$ $$\exp(-\eps n)\leq \inf_{J(f)}\hcL_{\phi}^n(\1)\leq \sup_{J(f)}\hcL_{\phi}^n(\1)\leq \exp(\eps n).$$
\end{prop}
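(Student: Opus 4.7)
The plan rests on two ingredients furnished by the hypotheses: the atom-free $\exp(P(f,\phi)-\phi)$-conformal probability measure $\mu$ provided by Lemma~\ref{l:uniqueeq}, and the topological exactness of $f$ on $J(f)$. Conformality gives $\hcL_\phi^* \mu = \mu$, hence the a priori normalization $\int_{J(f)} \hcL_\phi^n(\1) \, d\mu = 1$ for every $n \ge 1$. The entire task is to upgrade this mean-one identity to uniform sub-exponential two-sided bounds on $\hcL_\phi^n(\1)$.

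The main step, and the principal obstacle, is a distortion-type estimate of the form: there exists $\rho_0 > 0$ such that for every $\eps > 0$, every sufficiently large $n$, and every pair $x, x' \in J(f)$ with $\dist(x,x') < \rho_0$,
\[
e^{-\eps n} \, \hcL_\phi^n(\1)(x') \;\le\; \hcL_\phi^n(\1)(x) \;\le\; e^{\eps n} \, \hcL_\phi^n(\1)(x').
\]
I would establish this by pairing each preimage $y \in f^{-n}(x)$ with the preimage $y' \in f^{-n}(x')$ belonging to the same connected component of $f^{-n}(B(x,\rho_0))$, and bounding $\exp(S_n\phi(y) - S_n\phi(y'))$ via H\"older continuity of $\phi$ applied to $\sum_{k=0}^{n-1} \dist(f^k(y), f^k(y'))^\alpha$. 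Decomposing pull-backs of $B(x,\rho_0)$ into their diffeomorphic components (where Koebe-type distortion controls the sum) and the relatively few non-diffeomorphic ones passing near $\Crit(f)$, one bounds the latter contribution sub-exponentially by invoking hyperbolicity of $\phi$: the hypothesis $\sup_{J(f)} \tfrac{1}{m} S_m\phi < P(f,\phi)$ for some $m$ penalizes the weights $e^{S_n\phi - nP(f,\phi)}$ on orbits spending substantial time near critical points, which is precisely the regime where the geometric distortion could blow up.

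Granting this estimate, the conclusion is bookkeeping. Cover $J(f)$ by finitely many balls $B_1,\dots,B_N$ of radius $\rho_0/2$; on each $B_i$ the function $\hcL_\phi^n(\1)$ is multiplicatively comparable up to factor $e^{\eps n}$ by the distortion estimate, and then $\int \hcL_\phi^n(\1)\, d\mu = 1$ together with $\min_i \mu(B_i) > 0$ pins $\sup_{J(f)} \hcL_\phi^n(\1)$ and $\inf_{J(f)} \hcL_\phi^n(\1)$ into the window $[e^{-2\eps n}, e^{2\eps n}]$. To propagate the bounds to every single point of $J(f)$, I would invoke topological exactness to fix $m_0$ with $f^{m_0}(B) \supseteq J(f)$ for every ball $B$ of radius $\rho_0/2$, and apply the semigroup identity
\[
\hcL_\phi^{n+m_0}(\1)(y) \;=\; \sum_{z \in f^{-m_0}(y)} e^{S_{m_0}\phi(z) - m_0 P(f,\phi)}\, \hcL_\phi^n(\1)(z)
\]
by selecting a single preimage $z$ lying in a ball where $\hcL_\phi^n(\1)$ is already controlled; this gives matching bounds at every $y$, with a multiplicative prefactor depending only on $m_0$ and hence sub-exponential in $n$, absorbing any $\eps$-loss.
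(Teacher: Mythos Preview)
The paper does not prove this proposition at all: it is imported verbatim as Proposition~2.1 of~\cite{LiRiv12one} and used as a black box in the proof of parts~1 and~3 of the Key Lemma. There is therefore no argument in the present paper to compare your proposal against.

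Two remarks on the proposal itself. First, there is a logical risk in anchoring the argument on Lemma~\ref{l:uniqueeq}: in the real case that lemma is deduced from~\cite[Theorem~A]{LiRiv12one}, the same source from which Proposition~\ref{p:opertor} is quoted, and in~\cite{LiRiv12one} the operator bounds of Proposition~2.1 are in fact an ingredient in the construction of the conformal measure and equilibrium state. Invoking the conformal measure to prove the operator bounds is therefore circular as written; you would need an independent construction of~$\mu$ (e.g.\ a Patterson--Sullivan limit that does not already presuppose sub-exponential control of~$\hcL_\phi^n(\1)$). Second, the distortion step---pairing preimages across a small ball and controlling $\sum_k \dist(f^k y,f^k y')^\alpha$---is exactly where the difficulty of non-uniform hyperbolicity is concentrated. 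Your decomposition into diffeomorphic and non-diffeomorphic pull-backs is the right picture, but the assertion that the hyperbolicity of~$\phi$ ``penalizes'' the bad branches enough to make their total contribution sub-exponential is the heart of the matter and is not justified by the sketch; in~\cite{LiRiv12one} this is handled by a careful inductive argument rather than a single distortion bound.
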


We also need the following lemma.
\begin{lemm}[Lemma~5.1, \cite{LiRiv12one}]\label{l:conj}
Let $f$ be an interval map in $\sA_\R,$ and let $\phi:J(f)\to \R$ be H\"{o}lder continuous that is hyperbolic for~$f$. If $\mu$ is a $\exp(P(f,\phi)-\phi)$-conformal measure for $f$, then for every function~$h\in L^1(\mu),$ we have $$\int_{J(f)} \hcL_\phi (h)\ d\mu =\int_{J(f)} h \ d \mu.$$
\end{lemm}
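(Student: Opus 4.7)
The plan is to reduce the identity to a change-of-variables computation over the monotone branches of $f$, after which the result follows from the defining property of the conformal measure. Since $f\in\sA_\R$ is of class $C^3$ with non-flat critical points, $\Crit(f)$ is finite, so $\dom(f)\setminus\Crit(f)$ decomposes into finitely many open intervals $J_1,\ldots,J_N$ on each of which $f$ is strictly monotone, hence injective. Setting $A_i\=J_i\cap J(f)$ and adjoining the (finite, trivially injective) set $\Crit(f)\cap J(f)$, I obtain a finite Borel partition $\{A_i\}_{i=1}^M$ of $J(f)$ on each piece of which $f$ is injective; write $g_i:f(A_i)\to A_i$ for the continuous (hence Borel) inverse branch.

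First I would establish the identity for bounded non-negative Borel $h$. Enumerating preimages through these branches, $\cL_\phi(h)(x)=\sum_{i=1}^{M}\1_{f(A_i)}(x)\exp(\phi(g_i(x)))\,h(g_i(x))$, so
\begin{equation*}
\int_{J(f)}\cL_\phi(h)\,d\mu
\;=\;\sum_{i=1}^{M}\int_{f(A_i)} \exp(\phi\circ g_i)\,(h\circ g_i)\,d\mu.
\end{equation*}
Applying the defining relation of conformal measures to Borel subsets of $A_i$ (verified first for indicators from $\mu(f(A))=\int_A \exp(P(f,\phi)-\phi)\,d\mu$, then extended by linearity and monotone convergence) yields the push-forward formula
\begin{equation*}
\int_{f(A_i)}\psi\,d\mu \;=\; \int_{A_i}(\psi\circ f)\exp(P(f,\phi)-\phi)\,d\mu
\end{equation*}
for every bounded non-negative Borel $\psi$ on $f(A_i)$. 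Applying this with $\psi=\exp(\phi\circ g_i)(h\circ g_i)$, so that $\psi\circ f=\exp(\phi)\,h$ on $A_i$, collapses the $i$-th summand to $\exp(P(f,\phi))\int_{A_i}h\,d\mu$, and summing over $i$ gives $\int\cL_\phi(h)\,d\mu=\exp(P(f,\phi))\int h\,d\mu$, which is the claim after multiplying by $\exp(-P(f,\phi))$.

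To extend to $h\in L^1(\mu)$, I would decompose $h=h^+-h^-$, apply monotone convergence to the truncations $\min(h^\pm,n)$, and use linearity of both sides. The non-negative case just established incidentally shows that $\hcL_\phi$ is an $L^1$-isometry on non-negative functions, and hence an $L^1$-contraction in general by the triangle inequality, so all integrals remain finite throughout the limiting procedure.

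I do not foresee any serious obstacle: the argument is a bookkeeping exercise built on the injectivity of $f$ on each monotone piece together with the defining relation of the conformal measure, and no hypothesis on $\phi$ beyond measurability is actually used at this stage (the H\"older/hyperbolicity assumptions are only what guarantees existence of the conformal measure via Lemma~\ref{l:uniqueeq}). The only minor points requiring care are the Borel measurability of the inverse branches $g_i$ (immediate from the continuity and strict monotonicity of $f$ on each $J_i$) and the bootstrapping of the push-forward formula from indicators to general bounded Borel $\psi$.
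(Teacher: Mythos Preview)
The paper does not give its own proof of this lemma; it is quoted verbatim from~\cite{LiRiv12one} and used as a black box. Your argument is the standard duality computation showing that a conformal measure is a fixed point of the dual transfer operator, and it is correct: the decomposition of~$J(f)$ into finitely many pieces on which~$f$ is injective, the branch-by-branch rewriting of~$\cL_\phi(h)$, and the push-forward identity derived from the conformality relation all go through as you describe. Your closing remark is also accurate --- the H{\"o}lder and hyperbolicity hypotheses play no role in the identity itself, only in guaranteeing that such a~$\mu$ exists.
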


\begin{proof}[Proof of parts~$1$ and~$3$ of the Key Lemma]
Part~$1$ is a direct consequence of Proposition~\ref{p:opertor} with $\phi=\varphi.$ To prove part~$3$, note that by Lemma~\ref{l:uniqueeq} there is a $\exp(P(f,\varphi)-\varphi)$-conformal measure $\mu$ for $f$. Moreover,~$\nu$ is absolutely continuous with respect to~$\mu$.  Let $h_\nu:J(f)\to \R$ be a density function of $\nu$ with respect to $\mu$, so that $\nu=h_\nu\mu$. By Lemma~\ref{l:conj}, we have
\begin{align*}
\int_{J(f)} \exp(S_n(\psi))d \nu&=\int_{J(f)} \exp(S_n(\psi))h_\nu d \mu= \int_{J(f)} \hcL^n_\varphi(\exp(S_n(\psi))h_\nu)d \mu
\\&=\exp(-nP(f,\varphi) \int_{J(f)} \cL^n_\varphi(\exp(S_n(\psi))h_\nu)d \mu\\& = \exp(-nP(f,\varphi) \int_{J(f)} \cL^n_{\varphi+\psi}(h_\nu)d \mu\\&=\exp(n(P(f,\varphi+\psi)-P(f,\varphi)))\int_{J(f)} \hcL_{\varphi+\psi}(h_\nu)d \mu
\end{align*}
By Proposition~\ref{p:opertor} with $\phi=\varphi+\psi$, for every $\eps>0$ we have for every integer $n\geq 1$ large enough $$\exp(-\eps n)h_\nu \leq \inf_{J(f)}\hcL_{\varphi+\psi}^n(h_\nu)\leq \sup_{J(f)}\hcL_{\varphi+\psi}(h_\nu)\leq \exp(\eps n)h_\nu.$$ It follows that
\begin{multline*}\exp(-\eps n)=\int_{J(f)}\exp(-\eps n)h_\nu d\mu\leq\int_{J(f)} \hcL_{\varphi+\psi}(h_\nu)d \mu\\\leq \int_{J(f)}\exp(-\eps n)h_\nu d\mu=\exp(\eps n).
 \end{multline*}
 Therefore, for every sufficiently large integer~$n$ we have $$P(f,\varphi+\psi)-P(f,\varphi)-\eps\leq \frac{1}{n}\log \int_{J(f)} \exp(S_n(\psi))d\mu_\varphi \leq P(f,\varphi+\psi)-P(f,\varphi)+\eps.$$
Letting $\eps\to 0$ we complete the proof of part~$3$ of the Key Lemma.
\end{proof}

The rest of  this section is devoted to prove part~$2$ of the Key Lemma.
The proof, which is given at the end of this section, depends on several lemmas.

\begin{lemm}\label{l:stable}
 Let $f$ be an interval map in $\sA_\R$
 that is topologically exact on $J(f).$
Then, for every $\kappa > 0$ there is $\delta_1 > 0$ such that the following holds. For every interval $J\subset I$ such that $\partial J \subset J(f)$ and $|J|\leq \delta_1$,
every integer $n \geq 0$, and every pull-back $W$ of $J$ by $f^n$, we have $|W| \leq \kappa.$
\end{lemm}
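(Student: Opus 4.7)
The plan is to argue by contradiction, combining a Hausdorff-compactness argument on the sequence of pull-backs with topological exactness of $f$ on the perfect set $J(f)$ and the absence of wandering intervals for $C^3$ interval maps with non-flat critical points.

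Assume the conclusion fails: there exist $\kappa>0$, intervals $J_m\subset I$ with $\partial J_m\subset J(f)$ and $|J_m|\to 0$, integers $n_m\geq 0$, and pull-backs $W_m$ of $J_m$ by $f^{n_m}$ with $|W_m|\geq\kappa$. Complete invariance of $J(f)$ together with $f^{n_m}(\partial W_m)\subset\partial J_m\subset J(f)$ forces $\partial W_m\subset J(f)$. A bounded sequence $(n_m)$ is ruled out at once: along a subsequence $n_m$ is a constant $n$ and $W_m$ Hausdorff-converges to an interval $W_\infty$ of length $\geq\kappa$, so $f^n|_{W_\infty}$ would be constant, which is impossible for a $C^3$ composition whose critical points are finite in number and all non-flat. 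Hence $n_m\to\infty$; passing to further subsequences I may assume $W_m\to W_\infty$ in the Hausdorff metric with $|W_\infty|\geq\kappa$ and $\partial W_\infty\subset J(f)$, and that $J_m$ converges to a single point $y_\infty\in J(f)$.

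I then split into two cases. If the interior of $W_\infty$ meets $J(f)$, pick $x\in\mathrm{int}(W_\infty)\cap J(f)$ and $\epsilon>0$ with $\overline{B(x,\epsilon)}\subset\mathrm{int}(W_\infty)$; set $U=B(x,\epsilon)\cap J(f)$, a nonempty relatively open subset of $J(f)$. Since $W_m\to W_\infty$ in Hausdorff, $U\subset W_m$ for all large $m$. Topological exactness provides $N\geq 1$ with $f^N(U)=J(f)$, and since $f(J(f))=J(f)$ (also a consequence of topological exactness), one obtains $f^{n_m}(W_m)\supset f^{n_m}(U)=J(f)$ for $n_m\geq N$, contradicting $f^{n_m}(W_m)\subset J_m$ with $|J_m|\to 0$ because $\diam J(f)>0$. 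Otherwise, the interior of $W_\infty$ is disjoint from $J(f)$, and since $\partial W_\infty\subset J(f)$ it coincides with a single Fatou component $F$ of length $\geq\kappa$. Pick an interval $W'$ with $\overline{W'}\subset F$: then $W'\subset W_m$ for $m$ large and $\diam(f^{n_m}(W'))\to 0$. By the Martens--de Melo--van Strien theorem, either the orbit $(f^n(W'))_n$ is eventually mapped into a periodic interval --- in which case $|f^n(W')|$ is eventually bounded below, contradicting the shrinking --- or $W'$ lies in the basin of a periodic attracting cycle. A hyperbolic attracting cycle lies in the Fatou set, which forces $y_\infty$ to sit on the cycle, hence in the Fatou set, contradicting $y_\infty\in J(f)$.

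The remaining case, and the main technical obstacle, is a parabolic (neutral) cycle: its periodic point $p$ lies on $J(f)$, so $y_\infty=p$ is compatible with all the data and no immediate contradiction arises. To finish I expect to exploit the boundary points $\partial W_m\subset J(f)$, whose images $f^{n_m}(\partial W_m)\subset\partial J_m$ all converge to $p$; using topological exactness on $J(f)$ in a neighborhood of $p$ --- which forces Julia points near $p$ to escape any fixed neighborhood of $p$ in finitely many iterations --- one should derive a contradiction with $f^{n_m}(\partial W_m)\to\{p\}$, since $\partial W_\infty$ consists of Julia points at positive distance from one another. Making this precise is where the technical heart of the argument will lie.
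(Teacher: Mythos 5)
Your contradiction-and-compactness strategy is a sensible one, and the first half of the argument---ruling out bounded $n_m$, and dispatching the case where $\mathrm{int}(W_\infty)$ meets $J(f)$ via topological exactness---is correct. But you yourself flag the Fatou-component case as incomplete, and the sketch there contains two real problems. (For context: the paper's own ``proof'' is just a pointer to \cite[Lemma~A.2]{Riv1206} with a remark that the hyperbolic-repelling hypothesis there can be relaxed, so there is no detailed in-paper argument to compare against; what follows assesses your argument on its own terms.)

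First, the claim ``if the orbit $(f^n(W'))_n$ is eventually mapped into a periodic interval then $|f^n(W')|$ is eventually bounded below'' is false as a deduction: $f^n(W')$ can perfectly well shrink to a point inside a periodic interval (for instance towards an attracting or parabolic fixed point of the return map of that interval), so this branch produces no contradiction. Second, the parabolic sub-case is a genuine gap, and the fix you sketch does not work as stated: topological exactness does not force Julia points near a parabolic point $p$ to leave a fixed neighborhood of $p$ in bounded time (every preimage of $p$ stays there forever, and escape times of nearby Julia points on the repelling side grow without bound), and the relevant endpoint $b_m$ of $W_m$ may simply equal the fixed Julia point $b$, a preimage of $p$, for all $m$, in which case $f^{n_m}(b_m)=p$ eventually and there is nothing to contradict.

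There is, however, a clean way to close the Fatou case that avoids the attracting/parabolic distinction entirely and lets the hypothesis $\partial J_m\subset J(f)$ do the work. Once $\mathrm{int}(W_\infty)$ is a single Fatou component $F$, fix an interval $W'$ with $\overline{W'}\subset F$, so that $W'\subset W_m$ and hence $f^{n_m}(W')\subset J_m$ for all large $m$. By complete invariance of $J(f)$ the Fatou set is forward invariant, so each $f^{n}(W')$ lies in some Fatou component $F_n$, and by the absence of wandering intervals (Martens--de Melo--van Strien) the sequence $(F_n)_{n\geq 0}$ is eventually periodic, hence takes only finitely many values; let $\ell>0$ denote the minimum of their lengths. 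Now $J_m$ meets the open Fatou interval $F_{n_m}$, while $\partial J_m\subset J(f)$ is disjoint from it; since the connected set $F_{n_m}$ meets $J_m$ but cannot cross $\partial J_m$, it must be entirely contained in $J_m$, giving $|J_m|\geq|F_{n_m}|\geq\ell>0$. This contradicts $|J_m|\to 0$ and completes the case you left open.
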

\begin{proof}See the proof of~\cite[Lemma~A.2]{Riv1206} for example. There it is assumed  that all cycles are hyperbolic repelling, but the proof can be adapted to yield the lemma.
\end{proof}

\begin{lemm}[Lemma~2.3, \cite{LiRiv12one}]\label{l:onetime}
Let $f$ be an interval map in $\sA_\R$ and let $\varphi:J(f)\to \R$ be a H\"{o}lder continuous function. Then for every integer $N\geq 1$ and there is a constant $C>1$ such that the function~$\tvarphi\= \frac{1}{N}S_N(\varphi)$ satisfies the following properties:
\begin{enumerate}[1.]
\item The function~$\tvarphi$ is H\"{o}lder continuous of the same exponent as $\varphi$, $P(f, \tvarphi) = P(f, \varphi)$,  and $\varphi$ and $\tvarphi$ share the same equilibrium states;
\item For every integer $n\geq 1$, we have $$\sup_{J(f)}|S_n(\tvarphi)-S_n(\varphi)|\leq C.$$
\end{enumerate}
\end{lemm}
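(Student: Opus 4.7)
The plan is to handle the two parts essentially separately, both by elementary manipulations, with part~2 being a purely combinatorial rearrangement of the Birkhoff sum and part~1 being a straightforward application of the variational principle.

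For part~1, I would first observe that since $f$ is of class $C^3$ with non-flat critical points on the compact interval $\dom(f)$, $f$ is continuously differentiable on this compact domain and hence Lipschitz; therefore each iterate $f^i$ is Lipschitz, so each $\varphi \circ f^i$ is H\"older continuous of the same exponent as $\varphi$, and $\tvarphi = \frac{1}{N}\sum_{i=0}^{N-1}\varphi\circ f^i$ inherits this regularity as a finite linear combination. For the pressure identity, I would use $f$-invariance: for every $\mu \in \cM(J(f), f)$,
\begin{equation*}
\int_{J(f)} \tvarphi\, d\mu = \frac{1}{N}\sum_{i=0}^{N-1}\int_{J(f)} \varphi \circ f^i\, d\mu = \int_{J(f)} \varphi\, d\mu,
\end{equation*}
so the functionals $\mu \mapsto h_\mu(f) + \int \varphi\, d\mu$ and $\mu \mapsto h_\mu(f) + \int \tvarphi\, d\mu$ coincide on $\cM(J(f),f)$. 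By the variational principle~\eqref{e:variational principle} the two pressures agree and the two potentials share exactly the same set of equilibrium states.

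For part~2, I would swap the order of summation to write
\begin{equation*}
S_n(\tvarphi)(x) = \frac{1}{N}\sum_{k=0}^{n-1}\sum_{i=0}^{N-1}\varphi(f^{k+i}(x)) = \frac{1}{N}\sum_{i=0}^{N-1} S_n(\varphi)(f^i(x)),
\end{equation*}
and then compare each term to $S_n(\varphi)(x)$ via the telescoping identity $S_n(\varphi)(f^i(x)) - S_n(\varphi)(x) = \bigl(S_{n+i}(\varphi)(x) - S_n(\varphi)(x)\bigr) - S_i(\varphi)(x)$. Each of the two bracketed pieces is a sum of at most $i \le N-1$ values of $\varphi$, so its absolute value is bounded by $(N-1)\|\varphi\|_\infty$. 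Averaging over $i$ then yields
\begin{equation*}
\sup_{J(f)}|S_n(\tvarphi) - S_n(\varphi)| \le 2(N-1)\|\varphi\|_\infty,
\end{equation*}
and any constant $C > 2(N-1)\|\varphi\|_\infty + 1$ suffices.

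There is no substantive obstacle to overcome: the lemma is essentially an algebraic identity together with the tautological equality $\int \varphi\, d\mu = \int \tvarphi\, d\mu$ for invariant measures. The only point that requires a small remark is that preservation of the H\"older exponent genuinely uses the Lipschitz regularity of $f$ on $J(f)$, which is why it is natural to work in the class $\sA_{\R}$ of maps of class $C^3$ with non-flat critical points rather than merely continuous ones; however, since $J(f)$ is compact and $f$ is $C^1$ on $\dom(f)$, this is automatic.
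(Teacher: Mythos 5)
Your proposal is correct and follows essentially the same route as the paper's proof: both exploit that $\tvarphi - \varphi$ is a coboundary for $f$. The paper makes this explicit by defining $h = -\frac{1}{N}\sum_{j=0}^{N-1}(N-1-j)\,\varphi\circ f^j$, which satisfies $\varphi - \tvarphi = h\circ f - h$ and hence $S_n(\tvarphi) - S_n(\varphi) = h\circ f^n - h$, bounded by $2\sup_{J(f)}|h|$; your swap of the double sum together with the telescoping identity performs the same cancellation without naming $h$ explicitly.
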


\begin{lemm}[Lemma~3.2, \cite{LiRiv12two}]
\label{l:almost properness}
For each interval map~$f : I \to I$ in~$\sA_\R$ there is~$\varepsilon > 0$ such that the following property holds.
Let~$J_0$ be an interval contained in~$I$ satisfying~$|J_0| \le \varepsilon$, let~$n \ge 1$ be an integer, and let~$J$ be a pull-back of~$J_0$ by~$f^n$, such that for each~$j$ in~$\{1, \ldots, n \}$ the pull-back of~$J_0$ by~$f^j$ containing~$f^{n - j}(J)$ has length bounded from above by~$\varepsilon$.
If in addition the closure of~$J$ is contained in the interior of~$I$, then~$f^n(\partial J) \subset \partial J_0$.
\end{lemm}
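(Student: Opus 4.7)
I would prove the lemma by a direct topological argument applied to $f^n$ as a whole, rather than iteratively along the tower of intermediate pull-backs. Once the hypothesis $\overline{J}\subset\interior(I)$ has pushed each boundary point of $J$ away from $\partial I$, the conclusion is in fact a general topological fact about maximal connected preimages of intervals; the length bounds on $J_0$ and on the intermediate pull-backs seem to serve as compatibility conditions for the applications of the lemma elsewhere in the paper, rather than as ingredients of this particular proof.

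\textbf{Key one-step statement.} The main step is the following purely topological claim: if $g\colon I\to I$ is continuous, $V\subset I$ is an interval, and $W$ is a connected component of $g^{-1}(V)$, then for every $a\in\partial W$ with $a\in\interior(I)$ one has $g(a)\in\partial V$. The argument is by contradiction. By continuity and $W\subset g^{-1}(V)$ we have $g(a)\in\overline V$. If $g(a)$ were not in $\partial V$ it would lie in the topological interior of $V$ in $\R$, and there would exist $\delta>0$ with $(g(a)-\delta,g(a)+\delta)\subset V$; then the assumption $a\in\interior(I)$ would allow me to choose $\eta>0$ so small that $U := (a-\eta,a+\eta)\subset I$ and $g(U)\subset V$. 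The set $U$ is a connected subset of $g^{-1}(V)$, so it lies in a single connected component. Since $U$ is a two-sided neighborhood of the endpoint $a$ of the interval $W$, it meets $W$, and hence that component equals $W$, forcing $U\subset W$. But $U$ also contains points on the opposite side of $a$ from $W$, which cannot lie in $W$, a contradiction.

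\textbf{Conclusion and main obstacle.} I would then apply the one-step statement with $g=f^n$, $V=J_0$, and $W=J$. The hypothesis $\overline{J}\subset\interior(I)$ gives $\partial J\subset\interior(I)$, and the one-step statement then yields $f^n(a)\in\partial J_0$ for every $a\in\partial J$, which is exactly the desired conclusion $f^n(\partial J)\subset\partial J_0$. The only delicate point I expect is a uniform treatment of the possible endpoint structures of $J$ and $J_0$ (open, half-open, closed, or the degenerate singleton case); working throughout with the topological boundary in $\R$ makes the argument uniform, as the topological boundary of any interval is the finite set of its endpoints, and the degenerate singleton case gives an immediate contradiction, since a nondegenerate open neighborhood of a point cannot be contained in that single point.
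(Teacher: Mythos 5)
Your one-step topological claim is correct, and its application with $g=f^n$ does yield the stated conclusion. The proof of the claim is sound: if $g(a)\in\interior(J_0)$, then, since $a\in\interior(I)$, some two-sided neighbourhood $U$ of $a$ satisfies $U\subset I$ and $g(U)\subset J_0$; $U$ is connected, so it lies in a single component of $g^{-1}(J_0)$, and since $a\in\overline{J}$ this component must be $J$, forcing $U\subset J$ and hence $a\in\interior(J)$, contradicting $a\in\partial J$ (including the singleton case). Combined with $g(\overline{J})\subset\overline{J_0}$ and $\partial J_0=\overline{J_0}\setminus\interior(J_0)$, this gives $g(a)\in\partial J_0$.

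Note, however, that this paper does not itself prove the lemma: it cites it verbatim from Lemma~3.2 of \cite{LiRiv12two}, so there is no internal proof to compare against. What your argument shows is that, for the conclusion as quoted here, the hypotheses $|J_0|\le\varepsilon$ and the length bound $\le\varepsilon$ on all intermediate pull-backs play no role — only $\overline{J}\subset\interior(I)$ is used, and only to place $\partial J$ inside $\interior(I)$. An iterative proof (showing $f(\partial J)\subset\partial J_1$, $f(\partial J_1)\subset\partial J_2$, and so on, where $J_k$ is the pull-back of $J_0$ by $f^{n-k}$ containing $f^k(J)$) would genuinely need to know that each $\overline{J_k}$ avoids $\partial I$, and that is plausibly where the smallness hypotheses enter in the source; your direct application to $g=f^n$ sidesteps this entirely. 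The one caveat worth keeping in mind is that Lemma~3.2 of \cite{LiRiv12two} may have a stronger conclusion (for instance, control of the intermediate pull-backs themselves, or a genuine properness statement along the whole chain) for which those hypotheses are essential; you have only shown that the weaker statement quoted here is a soft consequence of $\overline{J}\subset\interior(I)$.
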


Given an integer~$n \ge 1$ and a point~$x$ in~$\dom (f)$, a preimage~$y$ of~$x$ by~$f^n$ is \emph{critical} if~$Df^n(y) = 0$, and it is \emph{non-critical} otherwise.
The following lemma was proved  in~\cite{LiRiv12one}, whose proof is based on Przytycki and Urba{\'n}ski's adaptation to one-dimensional maps of Katok-Pesin theory, see for example~\cite[\S$11.6$]{PrzUrb10}.
\begin{lemm}[Lemma~2.4, ~\cite{LiRiv12one}]
\label{l:diffeomorphic pressure}
Let~$f$ be an interval map in~$\sA_\R$ that is topologically exact on~$J(f)$, and let~$\phi : J(f) \to \R$ be a H{\"o}lder continuous potential satisfying~$\sup_{J(f)} \phi < P(f, \phi)$.
Then for every point~$x_0$ of~$J(f)$ having infinitely many non-critical preimages, there is~$\delta > 0$ such that the following property holds: If for each integer~$n \ge 1$ we denote by~$\fD_n$ the collection of diffeomorphic pull-backs of~$B(x_0, \delta)$ by~$f^n$, then
$$ \liminf_{n \to + \infty} \frac{1}{n} \log \sum_{W \in \fD_n} \inf_{W \cap J(f)} \exp(S_n(\phi))
\ge
P(f, \phi). $$ In particular, $$\liminf_{n \to + \infty} \frac{1}{n} \log \sum_{x\in f^{-n}(x_0)} \exp(S_n(\phi)(x))
\ge
P(f, \phi).$$
\end{lemm}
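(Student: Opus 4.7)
The plan is to reduce the problem to a uniformly hyperbolic subsystem via the Katok--Pesin construction that Przytycki and Urba{\'n}ski adapted to one-dimensional maps (\cite[\S 11.6]{PrzUrb10}), and then to manufacture the diffeomorphic pull-backs of $B(x_0,\delta)$ by $f^n$ from orbits of this subsystem. Fix $\eps>0$ and, by the variational principle, pick an ergodic measure $\nu\in\cM(J(f),f)$ with $h_\nu(f)+\int \phi\,d\nu>P(f,\phi)-\eps$. The hypothesis $\sup_{J(f)}\phi<P(f,\phi)$ forces $h_\nu(f)>0$, and for an interval map in $\sA_\R$ positive entropy implies positive Lyapunov exponent, so $\nu$ is hyperbolic in Pesin's sense.

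The Katok--Pesin machinery then yields, for every $\eta>0$, a compact set $\Lambda\subset J(f)$ bounded away from $\Crit(f)$ and an integer $N\ge 1$ such that $f^N|_\Lambda$ is uniformly expanding, has bounded distortion on balls of some definite radius $r_0>0$, and carries, for all large $n$, an $(n,r_0)$-separated set $E_n\subset\Lambda$ with $\# E_n\ge \exp(n(h_\nu(f)-\eta))$ and $\bigl|\tfrac1n S_n(\phi)(x)-\int\phi\,d\nu\bigr|\le\eta$ for all $x\in E_n$. Since $x_0$ has a non-critical preimage $y_0\in f^{-N_0}(x_0)$, the branch $g$ of $f^{-N_0}$ through $y_0$ is a local diffeomorphism, and by topological exactness of $f$ on $J(f)$ some iterate $f^{N_1}(\Lambda)$ meets a prescribed neighbourhood of $y_0$. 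Choose $\delta>0$ so small that $g$ maps $B(x_0,\delta)$ diffeomorphically into a set of diameter at most the $\delta_1$ of Lemma~\ref{l:stable} associated to some $\kappa<\dist(\Lambda,\Crit(f))$.

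For every $x\in E_n$, pulling $B(x_0,\delta)$ back first through $g$, then along $N_1$ steps to a base point of $\Lambda$, and finally along the orbit of $f^N|_\Lambda$ terminating at $x$, produces a pull-back $W_x$ of $B(x_0,\delta)$ by $f^{n'}$ with $n' := n+N_0+N_1$; Lemma~\ref{l:stable} keeps every intermediate pull-back of diameter at most $\kappa$, hence disjoint from $\Crit(f)$, so $W_x$ is diffeomorphic. Bounded distortion on $\Lambda$ and H{\"o}lder continuity of $\phi$ give
\[
\inf_{W_x\cap J(f)} S_{n'}(\phi) \ge S_n(\phi)(x) - C
\]
for a constant $C$ depending only on $\phi$, $g$, and the distortion bound, while $(n,r_0)$-separation of $E_n$ combined with uniform expansion on $\Lambda$ makes the $W_x$ pairwise distinct. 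Summing over $E_n$ yields
\[
\sum_{W\in\fD_{n'}}\inf_{W\cap J(f)}\exp(S_{n'}(\phi)) \ge \exp\!\Bigl(n\bigl(h_\nu(f)+\textstyle\int\phi\,d\nu-2\eta\bigr)-C\Bigr),
\]
so the $\liminf$ in the first inequality is at least $P(f,\phi)-\eps-2\eta$; letting $\eta,\eps\to 0$ finishes that bound.

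The ``in particular'' clause is then free: each $W\in\fD_n$ contains a unique preimage $y_W\in f^{-n}(x_0)$, which lies in $J(f)$ by complete invariance, so $\inf_{W\cap J(f)}\exp(S_n(\phi))\le \exp(S_n(\phi)(y_W))$ and distinct $W$ give distinct $y_W$. The main obstacle is ensuring diffeomorphicity of the full pull-back through the transition from $\Lambda$ to $y_0$: this is precisely where Lemma~\ref{l:stable} is used, in tandem with the fact that the Pesin hyperbolic set $\Lambda$ is uniformly separated from $\Crit(f)$. Once this diameter control is in place, the remaining work is a standard Bowen-type estimate comparing $(n,r_0)$-separated cardinality to topological pressure on the hyperbolic subsystem.
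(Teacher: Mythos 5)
The paper does not prove this lemma itself: it is quoted from~\cite[Lemma~2.4]{LiRiv12one}, and the only indication of method given is the remark that the proof ``is based on Przytycki and Urba{\'n}ski's adaptation to one-dimensional maps of Katok--Pesin theory.'' Your proposal is exactly that strategy (Ruelle inequality to get positive Lyapunov exponent, Katok--Pesin horseshoe with $(n,r_0)$-separated sets, shadowing pull-backs, H{\"o}lder control of the Birkhoff sums), so it matches the approach the paper points to.

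There is, however, one step in your write-up that is too quick. You argue that Lemma~\ref{l:stable} ``keeps every intermediate pull-back of diameter at most $\kappa$, hence disjoint from $\Crit(f)$'' with $\kappa<\dist(\Lambda,\Crit(f))$. Small diameter alone does not give disjointness from $\Crit(f)$: that inference only works when the pull-back shadows a point of $\Lambda$. Along the $N_1$-step bridge from a point $p\in\Lambda$ to the neighbourhood of $y_0$, the intermediate images $f(p),\dots,f^{N_1-1}(p)$ need not be anywhere near $\Lambda$, so they could in principle sit close to $\Crit(f)$, and then the corresponding pull-backs would fail to be diffeomorphic. What saves the argument is that this bridge is a single fixed finite orbit which avoids $\Crit(f)$ (because $p$ belongs to a Pesin block and $y_0$ is a non-critical preimage of $x_0$), hence it has a definite positive distance from $\Crit(f)$, and you may then shrink $\kappa$ and hence $\delta$ accordingly. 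A cleaner variant avoids the bridge altogether: by topological exactness and the hypothesis that $x_0$ has infinitely many non-critical preimages, the non-critical preimages of $x_0$ are dense in $J(f)$, so one can choose $y_0\in f^{-N_0}(x_0)$ non-critical and lying inside the reference ball $B(p_0,r_0)$ of the horseshoe, making $N_1=0$ and leaving only pull-backs centred on the $\Lambda$-orbit of $x$, where your diameter argument applies verbatim. With that adjustment the proposal is sound.
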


  Throughout the rest of this section,  fix~$f$ and $\varphi$ as in the Key Lemma. Recall that for every integer $n\geq 1$,
$\Per_n(f)=\{p\in J(f): f^n(p)=p\}.$ Since $\varphi$ is hyperbolic for $f$, there is an integer~$N \ge 1$ such that the function~$\tvarphi \= \frac{1}{N} S_N(\varphi)$ satisfies~$\sup_{J(f)} \tvarphi < P(f, \varphi)$.
By part~$1$ of Lemma~\ref{l:onetime}, the function~$\tvarphi$ is H\"{o}lder continuous and $$P(f, \tvarphi) = P(f, \varphi)>\sup_{J(f)}\tvarphi.$$

\begin{lemm}\label{l:upperiodic}
$\liminf \limits_{n\to +\infty}\frac{1}{n}\log \sum_{p\in \Per_n(f)}\exp(S_n(\tvarphi)(p))\geq P(f,\tvarphi).$
\end{lemm}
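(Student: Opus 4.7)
The plan is to lower-bound the periodic-orbit sum at time $n+M$ by the sum over diffeomorphic pull-backs of a small ball at time $n$; the latter is already controlled by Lemma~\ref{l:diffeomorphic pressure}, and the extra $M$ iterates, furnished by topological exactness, serve to ``close'' each such pull-back into an honest fixed point of $f^{n+M}$ lying in $J(f)$. Concretely, I would fix a point $x_0 \in J(f)$ with infinitely many non-critical preimages, apply Lemma~\ref{l:diffeomorphic pressure} to $\tvarphi$ (valid since $\sup_{J(f)} \tvarphi < P(f,\tvarphi)$) to obtain $\delta_0 > 0$ with
$$\liminf_{n \to +\infty} \tfrac{1}{n} \log \sum_{W \in \fD_n} \inf_{W \cap J(f)} \exp(S_n(\tvarphi)) \ge P(f,\tvarphi),$$
where $\fD_n$ is the family of diffeomorphic pull-backs of $B(x_0,\delta_0)$ by $f^n$; then, using that $J(f)$ is perfect and Lemma~\ref{l:stable}, I would shrink $\delta_0$ so that $\partial B(x_0,\delta_0) \subset J(f)$ and every pull-back of $B(x_0,\delta_0)$ by any iterate of $f$ has diameter at most $\delta_0/4$. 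Topological exactness of $f$ on $J(f)$ then provides an integer $M \ge 1$ with $f^M\bigl(B(x_0,\delta_0/2) \cap J(f)\bigr) = J(f)$.

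For each $W \in \fD_n$ I pick $y_W \in W \cap J(f)$ and choose $z_W \in B(x_0,\delta_0/2) \cap J(f)$ with $f^M(z_W) = y_W$; exploiting the abundance of such $z_W$ and the finiteness of $\Crit(f^M)$, I may insist that $z_W \notin \Crit(f^M)$, so the component $W'$ of $f^{-(n+M)}(B(x_0,\delta_0))$ through $z_W$ is itself a diffeomorphic pull-back. The diameter bound $|W'| \le \delta_0/4$ combined with $z_W \in B(x_0,\delta_0/2)$ gives $\overline{W'} \subset B(x_0,3\delta_0/4) \subsetneq B(x_0,\delta_0)$, and Lemma~\ref{l:almost properness} (whose hypotheses are met by the same diameter control) yields $f^{n+M}(\partial W') \subset \partial B(x_0,\delta_0)$. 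The inverse branch $g : B(x_0,\delta_0) \to W'$ is thus well defined and, by complete invariance of $J(f)$, satisfies $g(B(x_0,\delta_0) \cap J(f)) \subset W' \cap J(f)$. Iterating $g$ from $z_W$ produces a sequence in $W' \cap J(f)$ whose accumulation points are fixed points of $g$ lying in $\overline{W'} \cap J(f)$; a fixed point on $\partial W'$ would be mapped by $f^{n+M}$ into $\partial B(x_0,\delta_0)$, contradicting $\overline{W'} \subsetneq B(x_0,\delta_0)$, so any accumulation point $p_W$ lies in $W' \cap J(f) \subset \Per_{n+M}(f)$.

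Distinct $W \in \fD_n$ yield disjoint $W'$, hence pairwise distinct $p_W$; and since $f^M(p_W) \in W$ and $|S_M(\tvarphi)(p_W)| \le M\|\tvarphi\|_\infty$, the bound
$$\exp(S_{n+M}(\tvarphi)(p_W)) \ge e^{-M\|\tvarphi\|_\infty} \inf_{W \cap J(f)} \exp(S_n(\tvarphi))$$
follows, and summing over $W$ and taking $\liminf \tfrac{1}{n+M}\log$ of both sides gives the desired inequality. The hardest step I anticipate is ensuring both that $W'$ can be realized as a \emph{diffeomorphic} pull-back and that the resulting fixed point $p_W$ actually sits in $J(f)$; both rely on exploiting the freedom in the topological-exactness choice of $z_W$ together with complete invariance. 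A secondary technical point is that Lemma~\ref{l:almost properness} requires $\overline{W'}$ to be compactly contained in the interior of $\dom(f)$, which may necessitate a preliminary restriction of $x_0$ to lie away from $\partial\,\dom(f)$.
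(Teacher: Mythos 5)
Your overall strategy---use Lemma~\ref{l:diffeomorphic pressure} for the lower bound and ``close up'' each diffeomorphic pull-back into a genuine periodic point by spending a bounded number of extra iterates provided by topological exactness---is the same as the paper's, but the execution is genuinely different and it is in the execution that a gap appears. The paper works at time~$n-s$: it takes a closed subinterval $V\subset B(x_0,\delta)$ with $\partial V\subset J(f)$ and $s$ with $f^s(V)\supset J(f)$, and for each diffeomorphic pull-back $D\in\fD_{n-s}$ looks at the sub-pull-back $D'$ of $V$ inside $D$. Then $f^n(D')=f^s(V)\supset J(f)\supset\partial D'$ forces $D'\subset f^n(D')$ \emph{regardless of whether $f^n|_{D'}$ is injective}, and the intermediate value theorem produces the fixed point directly. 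You instead work at time $n+M$, going forward: you pick a single $f^M$-preimage $z_W\in B(x_0,\delta_0/2)\cap J(f)$ of a point $y_W\in W\cap J(f)$ and let $W'$ be the pull-back of $B(x_0,\delta_0)$ by $f^{n+M}$ through $z_W$.

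The gap is that your argument crucially needs $W'$ to be a \emph{diffeomorphic} pull-back (either to define the inverse branch $g:B(x_0,\delta_0)\to W'$, or equivalently to rule out both endpoints of $W'$ being mapped by $f^{n+M}$ to the \emph{same} endpoint of $B(x_0,\delta_0)$, in which case $f^{n+M}(W')$ is a proper subinterval anchored at $\partial B(x_0,\delta_0)$ and need not contain $W'$). But $z_W\notin\Crit(f^M)$ only says the derivative $Df^M$ is nonzero \emph{at} $z_W$; it does not preclude other points of the component $W'$ from being critical for $f^M$, in which case $f^M|_{W'}$ is not injective and $W'$ is not diffeomorphic. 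Nothing in your construction rules this out, and Lemma~\ref{l:stable} only bounds diameters, it does not keep the pull-back away from critical points. (A secondary issue: even granting $g$, accumulation points of the backward orbit $(g^k(z_W))_k$ need not be fixed points of $g$ --- for a decreasing interval branch they can be genuinely $2$-periodic; you would need to pass to $g^2$ or argue monotonicity.) The paper's $f^s(V)\supset J(f)\supset\partial D'$ step is precisely what lets it bypass any injectivity requirement on the extra $s$ iterates, and this is the idea your proposal is missing.
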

\begin{proof}
Let $x_0$ be a point of $J(f)$ that is not a boundary point of a connected component of $\R\setminus J(f)$. Assume in addition that for every $c$ in $\Crit(f)$ and every integer $n\geq 1$ we have $f^n(c)\neq x_0.$
Let $\delta>0$ be the constant given by Lemma~\ref{l:diffeomorphic pressure} with $\phi=\tvarphi$, and
let $V\subset B(x_0, \delta)$ be a closed interval such that $\partial V\subset J(f)$ and such that~$x_0$ is an interior point of~$V$.
Since $f$ is topologically exact on its Julia set, there is $s\geq 1$ such that $f^{s}(V)\supset J(f).$

For each integer~$n \ge 1$, denote by~$\fD_n$ the collection of diffeomorphic pull-backs of~$B(x_0,\delta)$ by~$f^n$. For every integer $n> s$ and each $D\in \fD_{n-s}$, let $D'$ be the connected component of $f^{-(n-s)}(V)$ contained in $D$. Note that $\partial D' \subset J(f)$ and  $f^{n-s}(D')=V$. Therefore $D'\subset f^n(D'),$ and so there is a point~$p\in \Per_n(f)$ in $D'.$  It follows that for every $n> s$
\begin{multline*}
\frac{1}{n}\log \sum_{p\in \Per_n(f)}\exp(S_n(\tvarphi)(p))\geq \frac{1}{n}\log \left( \sum_{D\in \fD_{n-s}}\sum_{p\in D'\cap\Per_n(f)}\exp(S_n(\tvarphi)(p))\right)\\= \frac{1}{n}\log \left( \sum_{D\in \fD_{n-s}}\sum_{p\in D'\cap\Per_n(f)}\left(\exp\left(S_{n-s}(\tvarphi)(p)+S_s(\tvarphi)(f^{n-s}(p))\right)\right)\right)\\ \geq \frac{1}{n} \log \sum_{D \in \fD_{n-s}} \inf_{D' \cap J(f)} \exp(S_{n-s}(\tvarphi))+\frac{s}{n} \inf_{J(f)} \tvarphi .
\end{multline*}
By Lemma~\ref{l:diffeomorphic pressure} we have $$\liminf_{n \to + \infty} \frac{1}{n} \log \sum_{D \in \fD_n} \inf_{D' \cap J(f)} \exp(S_n(\tvarphi))
\ge
P(f, \tvarphi).$$  Hence,
\begin{equation*}\label{e:below}
\liminf_{n\to +\infty}\frac{1}{n}\log \sum_{p\in \Per_n(f)}\exp(S_n(\tvarphi)(p))\geq P(f,\tvarphi),
\end{equation*}
and we complete the proof.
\end{proof}

\begin{lemm}\label{lem:bad}
Let $\kappa>0$, and
let $L \ge 1$ be an integer such that for every critical point~$c$ and every~$i$ in~$\{1,2,\cdots, L\}$, we have either
$$ f^i(c)\not\in B(\Crit, \kappa)
\text{ or }
f^i(c)\in \Crit(f). $$
Then for every interval $W$ and each integer $n\geq 1$ such that for every $i$ in $\{0,1, \cdots, n\}$ we have $|f^i(W)|<\kappa,$  there are at most $2^{1+n/L}$ critical points of $f^n$ in $W$. In particular, there are at most $ 2^{2+n/L}$ maximal monotonic intervals of $f^n$ in~$W$.
\end{lemm}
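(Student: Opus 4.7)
The plan is to track the evolution of the partition $P_i$ of $W$ into the maximal monotone pieces of $f^i|_W$ for $i = 0, 1, \ldots, n$, and use the condition on $L$ to bound how many refinements occur. Writing $g_n(W)$ for the number of critical points of $f^n$ in the interior of $W$, one has $|P_n| = g_n(W) + 1$, so the critical-point bound immediately implies the monotone-interval bound since $|P_n| \le 2^{1+n/L} + 1 \le 2^{2+n/L}$. Recall the refinement rule: $P_{i+1}$ is obtained from $P_i$ by splitting each $Q \in P_i$ at the $f^i|_Q$-preimages of the critical points of $f$ that lie in the interior of $f^i(Q)$.

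Define the critical-visit set $T = \{i \in \{0, \ldots, n-1\}: f^i(W) \cap \Crit(f) \ne \emptyset\}$, and group $T$ into maximal ``clusters'' of consecutive indices with successive gaps at most $L$. For a pair $i_1 < i_2$ in one cluster, pick $c_1 \in \Crit(f) \cap f^{i_1}(W)$ and $c_2 \in \Crit(f) \cap f^{i_2}(W)$; then $f^{i_2-i_1}(c_1) \in f^{i_2}(W) \subset B(c_2, \kappa) \subset B(\Crit, \kappa)$, so the hypothesis on $L$ forces $f^{i_2-i_1}(c_1) \in \Crit(f)$. Assuming $\kappa$ is small enough that each interval of diameter less than $\kappa$ contains at most one critical point of $f$ (so that $\Crit(f) \cap f^i(W) = \{c_i\}$ is uniquely determined for $i \in T$), this forces $c_{i_2} = f^{i_2-i_1}(c_{i_1})$, and hence the critical values within a cluster lie on a single forward orbit of $f$.

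The core geometric claim is that each cluster contributes at most one doubling of $|P_i|$. At the first cluster time $i_1$, each $Q \in P_{i_1}$ containing $c_{i_1}$ in its image interior splits into two subpieces $Q', Q''$; each new subpiece has $c_{i_1}$ as a boundary point of its $f^{i_1}$-image. At each subsequent cluster time $i_j$, the critical value $c_{i_j} = f^{i_j - i_1}(c_{i_1})$ lies on the boundary of the image $f^{i_j}(Q')$ by induction on $j$: along the cluster there is no interior critical value, so $f^{i_j - i_1}$ is monotone on $f^{i_1}(Q')$ and thus maps boundary to boundary. Hence no further splitting of $Q'$ occurs within the cluster. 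Combining with the fact that pieces of $P_{i_1}$ whose images avoid $c_{i_1}$ in their interiors do not split at $i_1$ (and, by a parallel orbit-monotonicity argument, do not split at later times in the same cluster either), the total number of doublings along $\{0, \ldots, n-1\}$ equals the number $K$ of clusters. Since distinct clusters are separated by gaps strictly greater than $L$, we have $K \le 1 + n/L$, giving $|P_n| \le 2^K \le 2^{1 + n/L}$ and therefore $g_n(W) \le 2^{1+n/L}$.

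The hardest part is the rigorous verification of the ``no new splitting within a cluster'' claim, especially the bookkeeping for the unchanged pieces of $P_{i_1}$ and the extension of the argument to the general case where $f^i(W)$ may contain several critical points of $f$ simultaneously (a situation avoided by the small-$\kappa$ normalization above, but potentially present in the stated generality, where it can be handled by applying the orbit-relation argument separately to each of the finitely many critical points in $f^i(W)$ and carefully accounting for the boundary propagation of each). Once this geometric core is in place, the counting of clusters and the inequality $|P_n| \le 2^K$ are mechanical.
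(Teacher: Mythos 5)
Your cluster decomposition is a genuinely different, and in principle sharper, cut of the time interval than the paper's. The paper defines inductively $0 = n_0 < n_1 < \cdots < n_s = n$ with $n_j$ the first critical-visit time $\ge n_{j-1}+L$ (so $s \le 1 + n/L$) and shows $f^{n_j - n_{j-1}}$ has at most one critical point in $f^{n_{j-1}}(W)$; you instead group the critical-visit times $T$ into maximal clusters with successive gaps $\le L$ and argue each cluster contributes at most one splitting. A single cluster can span many of the paper's blocks, so $K \le s$; the bound $K \le 1 + n/L$ still holds because distinct clusters are separated by gaps $> L$. Inside a block or a cluster both arguments rest on the same orbit-propagation mechanism (a critical visit at one time forces the critical point at the next visit, $\le L$ later, to be the forward image of the first, and that image sits at the tip of the fold), so the two proofs are cousins, not strangers.

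There is, however, a genuine gap in your final accounting. The identity $|P_n| = g_n(W) + 1$ is false: a critical point of $f^n$ of odd local order is an inflection point across which $f^n$ stays monotone, so it does not bound a new monotone piece, and the correct relation is only $|P_n| \le g_n(W) + 1$. You then deduce $g_n(W) \le 2^{1+n/L}$ from $|P_n| \le 2^K$, which uses the equality precisely in the direction that fails, since $g_n(W)$ can exceed $|P_n|$. To repair this within your framework you must count critical points of $f^n$ directly: any critical point $y$ of $f^n$ in $W$ has $f^j(y) \in \Crit(f)$ for some $j < n$, hence $j$ lies in some cluster $k$; the same orbit-propagation and unique-preimage argument you use for splitting then forces $f^{i_1^{(k)}}(y) = c_{i_1^{(k)}}$, where $i_1^{(k)}$ is the first time of cluster $k$, so each cluster produces at most $|P_{i_1^{(k)}}| \le 2^{k-1}$ critical points, and summing over clusters gives $< 2^K$. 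Separately, the small-$\kappa$ normalization (one critical point per $\kappa$-ball) and the claim that pieces whose images miss $c_{i_1}$ entirely never split later in the cluster are only gestured at; the latter does work because $f^{i_j - i_1}$ is monotone on each component of $f^{i_1}(W)\setminus\{c_{i_1}\}$, so the image of an interval disjoint from $c_{i_1}$ stays strictly to one side of $c_{i_j}$, but this, together with the small-$\kappa$ point (which the paper itself uses tacitly when it asserts ``at most one critical point in $f^{n_{i-1}}(W)$''), must be spelled out for the proof to be complete.
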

\begin{proof}
Fix an interval $W$ and an integer $n\geq 1$, such that for every $i$ in $\{0,1, \cdots, n\}$ we have $|f^i(W)|<\kappa$.  Define an integer $s \ge 1$ and a strictly increasing sequence of integers $(n_0,\cdots, n_s)$ with $n_0=0$ and $n_s=n$, by induction as follows. Suppose $j \ge 0$ is an integer such that $n_j \le n-1$ is already defined. If~$n_j+L \ge n$ or if $n_j+L \le n-1$ and for each $i$ in $\{n_j+L, \cdots, n-1\}$ the set $f^i(W)$ does not intersect~$\Crit(f)$, then put $n_{j+1}\=n, s\=j+1$ and stop. Otherwise, let~$n_{j+1}$ be the least integer $i$ in  $\{n_j+L, \cdots, n-1\}$ such that $f^i(W)\cap \Crit(f)\neq \emptyset$.

Using the definitions of $L$ and our construction of $(n_0, \cdots, n_s)$, we know that  and for every $i\in \{1,\cdots, s\}$ we have $n_i-n_{i-1}\geq L$ and the map $f^{n_{i}-n_{i-1}}$ has at most one critical point in $f^{n_{i-1}}(W)$.  It follows that the map $f^n$ has at most~$2^s$ critical points in~$W$.
Note that $s \le 1+n/L$, we conclude that the number of maximal monotonic intervals of $f^n$ contained in $W$ less than $1+2^s\leq 2^{2+n/L}.$
\end{proof}

\begin{lemm}\label{l:boundperiodic}
For every $\eta>1$ there is $\delta>0$ such that the following holds. Let $V$ be an interval intersecting $J(f)$ and  such that $|V|<\delta$. For every integer $n\geq 1$ and each pull-back $W$ of $V$ by $f^{n}$
there are at most $\eta^n$ periodic points of $f$ of periodic~$n$ in $W$.
\end{lemm}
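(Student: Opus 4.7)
\emph{Strategy.} The plan is to combine Lemmas~\ref{l:stable} and~\ref{lem:bad} with a bound on the number of period-$n$ points inside each maximal monotonic interval of $f^n|_W$. Given $\eta > 1$, first pick an integer $L$ so large that $2^{1/L} < \eta^{1/2}$. Using the finiteness of $\Crit(f)$, choose $\kappa > 0$ small enough that for every $c \in \Crit(f)$ and every $i \in \{1,\ldots,L\}$ either $f^i(c) \in \Crit(f)$ or $f^i(c) \notin B(\Crit(f), \kappa)$; this is the hypothesis of Lemma~\ref{lem:bad}. Applying Lemma~\ref{l:stable} with this $\kappa$ produces $\delta_1 > 0$, and I set $\delta := \delta_1$, shrinking further below the constant $\varepsilon$ of Lemma~\ref{l:almost properness} if necessary.

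\emph{Counting monotonic branches.} Let $V$ intersect $J(f)$ with $|V| < \delta$. Since $f$ is topologically exact on $J(f)$ with $\#J(f) \ge 2$, the Julia set is perfect, so I can enlarge $V$ to a closed interval $V' \supset V$ with $\partial V' \subset J(f)$ and $|V'| \le \delta_1$. Any pull-back $W$ of $V$ by $f^n$ is contained in a pull-back $W'$ of $V'$ by $f^n$; applying Lemma~\ref{l:stable} to the pull-back of $V'$ by $f^{n-i}$ containing $f^i(W)$ yields $|f^i(W)| \le \kappa$ for every $i \in \{0,\ldots,n\}$. Lemma~\ref{lem:bad} then shows that $f^n|_W$ has at most $2^{2+n/L}$ maximal monotonic intervals.

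\emph{Per-branch count and conclusion.} On each such maximal monotonic interval $J$, the restriction $f^n|_J$ is a homeomorphism onto a subinterval of $V'$ and period-$n$ points in $J$ are precisely its fixed points. A decreasing branch contributes at most one such point, since $f^n|_J - \id$ is then strictly decreasing. On an increasing branch, Lemma~\ref{l:almost properness} gives $f^n(\partial J) \subset \partial V'$, making $f^n|_J \colon J \to V'$ a full diffeomorphic pull-back of $V'$; a Koebe-type distortion bound for $C^3$ maps with non-flat critical points, combined with the fact that the diameter of such a full pull-back is much smaller than $|V'|$ for $n$ large, forces $|Df^n| \ge 1+\gamma > 1$ throughout $J$, so $f^n|_J - \id$ is strictly increasing and has at most one zero. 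Multiplying, the total number of period-$n$ points in $W$ is at most a constant times $2^{2+n/L}$, which by the choice of $L$ is $\le \eta^n$ for all $n \ge n_0$; the finitely many remaining small $n$ are absorbed by shrinking $\delta$ further, since the set of period-$n$ points is finite for each $n$. The main obstacle is the distortion-and-expansion estimate on increasing monotonic branches, which must be carried out within $\sA_\R$ — i.e.\ without appealing to the Polynomial Shrinking Condition — using only the full-pull-back structure furnished by Lemma~\ref{l:almost properness} together with $C^3$ bounded distortion for non-flat critical points.
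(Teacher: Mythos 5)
The branch-counting half of your argument (shrinking $V$ to a $V'$ with $\partial V' \subset J(f)$, using Lemma~\ref{l:stable} to control $|f^i(W)|$, then Lemma~\ref{lem:bad} to bound the number of maximal monotonic intervals by $2^{2+n/L}$) matches the paper's proof exactly. The problem is the per-branch count on \emph{increasing} branches, which you correctly flag as the ``main obstacle'' and then do not actually resolve. You want a Koebe-type distortion estimate to force $|Df^n| > 1$ throughout an increasing branch $J$, but neither ingredient of that estimate is available under the hypotheses of the lemma: (i) nothing in $\sA_\R$ plus topological exactness forces the diameter of a pull-back of $V'$ to become ``much smaller than $|V'|$'' as $n \to \infty$ — Lemma~\ref{l:stable} only gives a bound by the fixed constant $\kappa$, and you have explicitly disclaimed the Polynomial Shrinking Condition; (ii) even granting small diameter, real Koebe distortion for $C^3$ maps with non-flat critical points requires a pull-back of a definitely larger interval with Koebe space around $J$, which you have not constructed and which in general needs exactly the kind of backward-contraction or shrinking input you are trying to avoid. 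Without bounded distortion, a diffeomorphic $f^n \colon J \to V'$ can have $|Df^n| < 1$ somewhere, so $f^n|_J - \id$ need not be monotone and the ``at most one fixed point'' conclusion does not follow.

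The paper bypasses distortion entirely with a short topological argument: if a strictly monotone branch of $f^n$ contained two period-$n$ points $p < p'$, then $f^n([p,p']) = [p,p']$ (this forces the branch to be increasing; a decreasing branch cannot fix two points by elementary order reasons), hence $f^{nk}([p,p']) = [p,p']$ for all $k$, so the interval $[p,p']$ never covers $J(f)$, contradicting topological exactness. This gives exactly one period-$n$ point per maximal monotonic interval with no analytic input, which is what the $2^{2+n/L}$ count needs. You should replace the distortion argument by this observation; with that substitution, your proof is essentially the paper's. (Your closing device of shrinking $\delta$ to handle small $n$ is not needed once the per-branch bound is one rather than a constant.)
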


\begin{proof} Fix $\eta>1$ and let $L>1$ be large enough such that for every $n\geq L$ we have $2^{2+n/L}<\eta^n.$ Let $\kappa>0$ be sufficiently small so that for every critical point~$c$ and every~$i$ in~$\{1,2,\cdots, L\}$, we have either
$$ f^i(c)\not\in B(\Crit(f), \kappa)
\text{ or }
f^i(c)\in \Crit(f). $$
Let $\delta>0$ be the constant given by Lemma~\ref{l:stable}. Let $V$ be an interval intersecting $J(f)$ and such that $|V|<\delta.$
By Lemma~\ref{lem:bad}, for every $n\geq L$ and each pull-back $W$ of $V$ by $f^{n}$
there are at most $2^{2+n/L}$ maximal monotonic intervals of~$f^n$ in~$W$.

To complete the proof, it suffices to prove for every $n\geq 1$ and every interval~$U$ such that $f^n$ is strictly monotonic on $U$, there is at most one point in $U\cap\Per_n(f)$.  Otherwise, if there are two distinct points $p$ and $p'$ in $U\cap\Per_n(f)$, then by assumption $f^n([p,p'])=[p,p'].$ It follows that for any integer $k\geq 1$ we have $f^{nk}([p,p'])=[p,p']\not\supset J(f).$ This is a contradiction with our assumption that~$f$ is topologically exact on its Julia set.  The lemma is proved.
\end{proof}

\begin{proof}[Proof of part~$2$ of the Key Lemma]
In view of part~$2$ of Lemma~\ref{l:onetime} and of Lemma~\ref{l:upperiodic},
 it suffices to show
$$\limsup \limits_{n\to +\infty}\frac{1}{n}\log \sum_{p\in \Per_n(f)}\exp(S_n(\tvarphi)(p))\leq P(f,\tvarphi).$$
Denote by~$\varepsilon_0 > 0$ the constant given by Lemma~\ref{l:almost properness}. Fix $\eps\in (0,\eps_0)$ and let~$\delta_1$ be the constant given by Lemma~\ref{l:stable} with $\kappa=\eps$. Let $\eta>1$ be given, and let $\delta >0$ be the constant given by Lemma~\ref{l:boundperiodic}. Put $r_0\= \min \{\delta_1, \delta \},$ and let $\cV$ be a finite covering of $J(f)$ by intervals such that for every $V$ in $\cV$ we have $|V|<r_0$ and $\partial V\subset J(f).$ Moreover, put $\partial \cV\=\bigcup_{V\in \cV}\partial V.$ In view of~\cite[Lemma~2.8]{LiRiv12two}, see also~\cite[Lemma~4]{Prz90}, there is~$C_@>0$ such that for every integer $n\geq 1$   we have
\begin{equation}\label{e:top}
\sum_{y\in f^{-n}(\partial \cV)} \exp(S_n(\tvarphi)(y))\leq C_@\exp\left(n(P(f,\tvarphi)+\eps)\right).
\end{equation}

Fix an integer $n\geq 1$, and let $\cW_{n}$ be collection of all pull-backs of elements of~$\cV$ by $f^{n}$. For every $W\in \cW_{n}$, by Lemma~\ref{l:boundperiodic} there are at most $\eta^n$ periodic points of~$f$ of periodic $n$ in $W$, and by Lemma~\ref{l:stable} for every $i$ in $\{0,1, \cdots, n-1\}$ we have $|f^i(W)|<\eps.$ Therefore, by Lemma~\ref{l:almost properness} we have $f^{n}(\partial W)\subset \partial V$ if $W\cap \partial I=\emptyset.$  On the other hand, there exist $C>1$ and $\alpha>0$ only dependent of $\tvarphi$ such that for every $x,y\in W$ we have $|S_n(\tvarphi)(x)-S_n(\tvarphi)(y)|\leq C n \eps^\alpha.$ It follows that
\begin{multline*}
\sum_{p\in \Per_{n}(f)}\exp(S_n(\tvarphi)(p))\\= \sum_{W\in \cW_{n},\atop W\cap \partial I=\emptyset} \sum_{p\in W\cap\Per_{n}(f)}\exp\left(S_n(\tvarphi)(p)\right)+ \sum_{W\in \cW_{n},\atop W\cap \partial I\neq \emptyset}\sum_{p\in W\cap \Per_{n}(f)}\exp\left(S_n(\tvarphi)(p)\right)\\\leq  2 \#\cV \eta^n \exp\left(n\sup_{J(f)} \tvarphi\right)+2\eta^n\sum_{y\in f^{-n}(\partial \cV)} \exp\left(S_{n}(\tvarphi)(y)+ C n\eps^\alpha \right)
\end{multline*}
Together with $\sup_{J(f)}\tvarphi <P(f,\tvarphi)$ and~(\ref{e:top}),  this implies
\begin{multline*}
\sum_{p\in \Per_{n}(f)}\exp(S_n(\tvarphi)(p))\\<  2 \#\cV \eta^n \exp(n P(f,\tvarphi))+ 2C_@\eta^n \exp(C n \eps^\alpha) \exp(n (P(f,\tvarphi)+\eps))\\\leq 2(\#\cV+C_@)\eta^n \exp(n(P(f,\tvarphi)+C\eps^\alpha+\eps)).
\end{multline*}
Hence,
$$\limsup_{n\to +\infty}\frac{1}{n}\log \sum_{p\in \Per_{n}(f)}\exp(S_n(\tvarphi)(p)) \leq \log \eta+ P(f,\tvarphi)+C\eps^\alpha+\eps.$$
Letting $\eta \to 1$ and $\eps\to 0$, we have
$$\limsup_{n\to +\infty}\frac{1}{n}\log \sum_{p\in \Per_{n}(f)}\exp(S_n(\tvarphi)(p)) \leq P(f,\tvarphi),$$ and the proof is complete.
\end{proof}


\end{document}